\newcommand{\R}{\mathbb{R}}
\newcommand{\Hscr}{{\mathscr{H}}}
\newcommand{\inv}{^{-1}}
\newcommand{\ol}{\overline}
\newcommand{\sm}{\setminus}
\renewcommand{\div}{\text{div}\,}
\newtheorem{thm}{Theorem}
\newtheorem{lemma}[thm]{Lemma}
\newtheorem{prop}[thm]{Proposition}
\newtheorem{cor}[thm]{Corollary}
\newtheorem{conj}[thm]{Conjecture}
\theoremstyle{definition}
\newtheorem{definition}[thm]{Definition}
\theoremstyle{remark}
\newtheorem*{remark}{Remarks}
\newtheorem*{ack}{Acknowledgements}
\newtheorem*{aside}{Aside}
\DeclareMathOperator{\grad}{grad}
\DeclareMathOperator{\Div}{div}
\begin{document}
\title[Invariants of the harmonic conformal class]{Invariants of the harmonic conformal class of an asymptotically flat manifold}
\date{October 20, 2010}
\author{Jeffrey Jauregui}
\address{Department of Mathematics\\
University of Pennsylvania\\
David Rittenhouse Lab.\\
209 South 33rd Street\\
Philadelphia, PA 19104-6395}
\email{jauregui@math.upenn.edu}
\begin{abstract}
Consider an asymptotically flat Riemannian manifold $(M,g)$ of dimension $n \geq 3$ with nonempty compact boundary.  We recall the harmonic conformal class $[g]_h$ of the metric, which consists of all conformal rescalings given by a harmonic function raised to an appropriate power.  The geometric significance is that every metric in $[g]_h$ has the same pointwise sign of scalar curvature.  For this reason, the harmonic conformal class appears in the study
of general relativity, where scalar curvature is related to energy density (c.f. \cite{bray_RPI}).

Our purpose is to introduce and study invariants of the harmonic conformal class.  These invariants are closely related
to constrained geometric optimization problems involving hypersurface area-minimizers and the ADM mass.  In the final section, we discuss
possible applications of the invariants and their relationship with zero area singularities and the positive mass theorem.
\end{abstract}

\maketitle

\section{Introduction}
Let $M$ be a smooth manifold of dimension $n \geq 3$, possibly with a smooth boundary $\partial M$.  
Recall that two Riemannian metrics $g$ and $g'$ on $M$ are \emph{conformal}
if there exists a smooth function $u>0$ on $M$ such that $\ol g = u^{\frac{4}{n-2}} g$
pointwise as quadratic functions on the fibers of $TM$.  Conformality is obviously an equivalence relation, with the equivalence class
of a metric $g$ called the \emph{conformal class} of $g$:
$$[g] = \left\{u^{\frac{4}{n-2}} g : u > 0 \text { is a smooth function on } M\right\}.$$
The conformal class is an indispensable object in geometric analysis.

In the proof of the Riemannian Penrose inequality \cite{bray_RPI}, Bray observed the following fact: the relation on 
Riemannian metrics $\sim$ defined by
$$ \ol g \sim g \quad \text{ if and only if } \quad \ol g = u^{\frac{4}{n-2}} g,\; u > 0 \text{ is smooth, and } \Delta u =0$$
is an equivalence relation.  Here, $\Delta = \Div \grad$ is the Laplacian operator on functions with respect to $g$.  Reflexivity
of $\sim$ is clear; symmetry and transitivity follow from the formula for conformal metrics $\ol g = u^{\frac{4}{n-2}} g$:
\begin{equation}
\label{eqn_conf_laplacian}
\Delta(u\phi) = u^{\frac{n+2}{n-2}} \ol \Delta (\phi) + \phi \Delta u ,
\end{equation}
for any smooth function $\phi$ on $M$, where $\ol \Delta$ is the Laplacian with respect to $\ol g$ (c.f. Lemma 2.1 of \cite{bray_lee}).  Formula (\ref{eqn_conf_laplacian}) also explains the exponent $\frac{4}{n-2}$: no such equivalence relation exists for other values 
of this exponent.

The $\sim$ equivalence class of a metric $g$ is called the \emph{harmonic conformal class} of $g$ (although later we will slightly refine
this definition).  The harmonic conformal class is intimately connected with scalar curvature.  For, if $\ol g = u^{\frac{4}{n-2}} g$ are conformal metrics,
the scalar curvatures $\ol R$ and $R$ are related by
$$\ol R = u^{-\frac{n+2}{n-2}}\left( - \frac{4(n-1)}{n-2} \Delta u + R u\right),$$
so we see that $\ol g \sim g$ if and only if  $\ol R = u^{-\frac{4}{n-2}} R$.  In particular, metrics $\ol g \sim g$ have the same pointwise
sign of scalar curvature.

If $(M,g)$ is compact and without boundary, the harmonic conformal class consists only
of the constant rescalings of $g$, as follows from the maximum principle.  

In the context of general relativity, a natural class of manifolds to study are asymptotically flat manifolds of nonnegative scalar curvature
that possess a compact boundary.  The harmonic conformal class is well-adapted to studying such spaces, since it preserves both asymptotic
flatness (under suitable restrictions) and the nonnegativity of scalar curvature.  In the literature, much emphasis is placed on manifolds whose
boundary consists of minimal surfaces (c.f. \cites{imcf, bray_RPI}, for instance), but we make no such restriction here.

Our purpose is to define and study objects canonically associated to the harmonic conformal class of
an asymptotically flat manifold with boundary.  The outline is as follows: in section \ref{sec_definitions} we standardize
our definitions of asymptotic flatness, ADM mass, and the harmonic conformal class.  Section \ref{sec_numerical} defines two real number invariants of the 
harmonic conformal class, called $I_1$ and $I_2$.  In section \ref{sec_mass_profile} we motivate a constrained optimization problem for the ADM mass 
and apply it to define a function $\mu: \R^+ \to \R$ that depends only on the harmonic conformal class.  The first main result, 
Theorem \ref{thm_mu}, is that $\mu$ is given by
an explicit formula involving $I_1$ and $I_2$.  

We introduce a function $\alpha: \R^+ \to \R^+$ in section \ref{sec_area_profile}, also
a harmonic conformal invariant, that is substantially more subtle than its counterpart, $\mu$.  Roughly, the value of $\alpha(A)$ is determined
by maximizing the least area needed to enclose the boundary among metrics in the harmonic conformal class that measure the boundary
area to be at most $A$.  This minimax-type definition is somewhat delicate -- no simple formula for $\alpha$ is expected, and even showing 
that the maximum is attained seems to require enlarging the harmonic conformal class to allow for weak boundary regularity of the 
conformal factors. With this additional flexibility, we study $\alpha$ and the properties of the metrics which optimize it in section \ref{sec_properties}.  The main result here is that assuming good regularity for these optimal metrics, the resulting manifolds are such that the boundaries are enclosed by an area-minimizing surface that ``almost'' has zero mean curvature.  
The final two sections consist of examples and some conjectured applications of the techniques developed herein.  Assuming a certain
extension of the Riemannian Penrose inequality, we establish inequalities between the numerical invariants $I_1$ and $I_2$ and the functions $\mu$
and $\alpha$.  These estimates are particularly relevant for the study of zero area singularities, which we recall.  In closing, we discuss a possible
generalization of the positive mass theorem that allows for metrics with certain types of singularities.


\begin{ack}
Most of the content of this paper was part of my thesis work, and I am very indebted to my advisor Hugh Bray for countless discussions 
and suggestions.  I would also like to thank Bill Allard, Graham Cox, Michael Eichmair, George Lam, and Mark Stern for helpful discussions.
\end{ack}

\section{Definitions}
\label{sec_definitions}
We will consider asymptotically flat manifolds, which are spaces that geometrically approach Euclidean space in a precise sense.
\begin{definition}
\label{def_AF}
A smooth, connected, Riemannian manifold $(M,g)$ (possibly with compact boundary) of dimension $n\geq 3$ is \textbf{asymptotically flat} (with one end) if 
\begin{enumerate}[(i)]
\item there exists a compact subset $K \subset M$ and a diffeomorphism $\Phi: M \sm K \to \R^n \sm \ol{B}$ (where $\ol B$ is a closed ball), and
\item in the coordinates $(x^1, \ldots, x^n)$ on $M \sm K$ induced by $\Phi$, the metric obeys the decay conditions:
\begin{align*}
 |g_{ij} - \delta_{ij}| &\leq \frac{c}{|x|^p},&
 |\partial_k g_{ij}| &\leq \frac{c}{|x|^{p+1}},\\
 |\partial_k \partial_l g_{ij}| &\leq \frac{c}{|x|^{p+2}},&
 |R| &\leq \frac{c}{|x|^q},
\end{align*}
for $|x|=\sqrt{(x^1)^2 + \ldots + (x^n)^2}$ sufficiently large and all $i,j,k,l=1,\ldots,n$, where $c>0$, $p > \frac{n-2}{2}$, and $q > n$ are constants,
$\delta_{ij}$ is the Kronecker delta, $\partial_k = \frac{\partial}{\partial x^k}$, and $R$ is the scalar curvature of $g$.
\end{enumerate}
Such $(x^i)$ are called \textbf{asymptotically flat coordinates}.
\end{definition}
Several other inequivalent definitions of asymptotic flatness appear in the literature.  Next, we recall the definition of the ADM mass \cite{adm}, a number associated
to any asymptotically flat manifold, which provides some measure of the rate at which the metric becomes flat near infinity.
\begin{definition}
The \textbf{ADM mass} of an asymptotically flat manifold $(M,g)$ is the number 
$$m_{ADM}(g) = \frac{1}{2(n-1)\omega_{n-1}} \lim_{r\to \infty} \sum_{i,j=1}^n\int_{S_r} \left(\partial_i g_{ij} - \partial _j g_{ii}\right)\frac{x^j}{r} dA$$
where $(x^i)$ are asymptotically flat coordinates, $S_r$ is the coordinate sphere $\{|x|=r\}$, and $\omega_{n-1}$ is the
area of the unit sphere in $\R^n$.
\end{definition}
Other conventions for the normalizing constant appear in the literature.  The fundamental work of Bartnik establishes that the limit
exists, is independent of the choice of asymptotically flat coordinates, and is therefore a geometric invariant of $(M,g)$ \cite{bartnik}.

For instance, let $m>0$, and consider the following metric on $\R^n$ minus the open ball of radius $r_m:=\left(\frac{m}{2}\right)^{\frac{1}{n-2}}$ about the origin,
equipped with the metric
\begin{equation}
\label{eqn_schwarz_metric}
g_m = \left(1+\frac{m}{2|x|^{n-2}}\right)^{\frac{4}{n-2}} \delta,
\end{equation}
where $\delta$ is the standard flat metric. $g_m$ is called the \emph{Schwarzschild metric of mass $m$} and is 
asymptotically flat with ADM mass equal to $m$.

To define the harmonic conformal class, we slightly abuse our previous terminology by requiring the harmonic functions $u$ to approach 
one at infinity.  (Allowing $u$ to approach any positive constant at infinity only introduces constant rescalings of the metric.)
\begin{definition}
The \textbf{harmonic conformal class} of an asymptotically flat metric $g$ on a manifold $M$ of 
dimension $n\geq 3$ is the set of Riemannian metrics
$$[g]_h = \left\{ u^{\frac{4}{n-2}} g : u > 0 \text{ is smooth, } \Delta u = 0, \text{ and } u \to 1 \text{ at infinity}\right\}.$$
\end{definition}
For example, the Schwarzschild metric of mass $m$ and the flat metric on $\R^n$ minus the ball $B(0,r_m)$ belong to
the same harmonic conformal class.  In the case that $(M,g)$ is a complete asymptotically flat manifold
\emph{without} boundary, $[g]_h$ consists of the single element $g$.  (The maximum principle and the boundary
condition $u \to 1$ forces $u \equiv 1$.)

Throughout the rest of this paper, $(M,g)$ is an asymptotically flat manifold of dimension $n \geq 3$ with compact, smooth, 
nonempty boundary $\Sigma = \partial M$.  To be explicit, in the above definition we require that $u$ extends smoothly to the boundary.  It
is straightforward to check that every metric in $[g]_h$ is asymptotically flat, using the existence of an expansion of $u$ into
spherical harmonics near infinity \cite{bartnik}.

To conclude this section, we remark that $(M,g)$ has a unique Poisson kernel, namely a smooth, positive function $K(x,y)$, where 
$x \in M, y \in \Sigma$, and $x \neq y$, that is harmonic with respect to $g$ in the $x$-variable and satisfies the following property: if $w$ is a 
smooth, harmonic function on $M$ that approaches zero at infinity, then
$$w(x) = \int_\Sigma K(x,y) w(y) dA(y),$$
where $dA$ is the hypersurface measure on $\Sigma$ induced by $g$.  The Dirichlet problem for Laplace's equation can
be uniquely solved by prescribing continuous boundary data on $\Sigma$ and a constant at infinity.  The existence of the Poisson kernel follows from the existence of a Green's function, which in turn follows from asymptotic flatness.  

To simplify notation later, we define the following constants:
$$p = \frac{2(n-1)}{n-2}, \qquad k = \frac{4}{n-2}.$$

\section{Numerical invariants of the harmonic conformal class}
\label{sec_numerical}
In this section, we construct two \emph{numerical} invariants of the harmonic conformal class of a fixed asymptotically flat manifold
$(M,g)$ with compact boundary $\Sigma$.  The motivation for these natural invariants will come later in the paper.

First, recall that the \emph{capacity} of
$\Sigma$ in $(M,g)$, denoted $C_g(\Sigma)$, is the coefficient $c>0$ in the spherical harmonic expansion 
$$\varphi(x) = 1 - \frac{c}{|x|^{n-2}} + O(|x|^{1-n})$$
of the unique harmonic function $\varphi$ that vanishes on $\Sigma$ and approaches one at infinity.  An explicit formula for the capacity is
$$C_g(\Sigma) = \frac{1}{(n-2)\omega_{n-1}} \lim_{r \to \infty} \int_{S_r} \partial_\nu \varphi \, dA =: \frac{1}{(n-2)\omega_{n-1}} \int_{S_\infty} \partial_\nu \varphi \, dA,$$
where the last equality defines the notation $S_\infty$. Here, $\nu$ is the unit normal to the indicated surface pointing toward infinity,
$\partial_\nu$ is the directional derivative, and $dA$ is the area form on the indicated surface, all with respect to $g$. 
Since $\varphi$ is harmonic, the divergence theorem shows
$$\int_{S_\infty} \partial_\nu \varphi \, dA = \int_{S_r} \partial_\nu \varphi \, dA = \int_\Sigma \partial_\nu \varphi \, dA,$$
for all coordinate spheres $S_r$.  We follow the convention that the normal $\nu$ to $\Sigma$ also points toward infinity (into the manifold).  
By the maximum principle, $\partial_\nu \varphi > 0$ on $\Sigma$ or $S_r$.
\begin{lemma}
\label{lemma_I1}
If $(M,g)$ is asymptotically flat, then the number
$$I_1:=m_{ADM}(g) - 2C_g(\Sigma)$$
is an invariant of the harmonic conformal class $[g]_h$.  That is, if $\ol g \in [g]_h$, then
$$m_{ADM}(\ol g) - 2C_{\ol g}(\Sigma)=m_{ADM}(g) - 2C_g(\Sigma).$$
\end{lemma}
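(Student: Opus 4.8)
The plan is to track how both the ADM mass and the capacity transform under a conformal change $\ol g = u^{\frac{4}{n-2}} g$ by a positive $g$-harmonic function $u$ with $u \to 1$ at infinity, and then to observe that the two changes cancel in the combination $m_{ADM} - 2C$. Since $u$ is harmonic and tends to $1$, its spherical harmonic expansion near infinity (guaranteed by \cite{bartnik}) is controlled by a single monopole coefficient,
$$u(x) = 1 + \frac{a}{|x|^{n-2}} + O(|x|^{1-n}),$$
and I expect this one parameter $a$ to govern both transformations: concretely, I aim to show $m_{ADM}(\ol g) = m_{ADM}(g) + 2a$ and $C_{\ol g}(\Sigma) = C_g(\Sigma) + a$, from which the invariance of $I_1$ is immediate.

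For the mass, I would first note that because $u^{\frac{4}{n-2}} \to 1$, the chart furnishing asymptotically flat coordinates for $g$ also serves for $\ol g$, and by Bartnik's theorem the mass may be computed in these shared coordinates. Writing $\ol g_{ij} - \delta_{ij} = (g_{ij} - \delta_{ij}) + (u^{\frac{4}{n-2}}-1)\,g_{ij}$ and using that the ADM boundary integrand is, to the order that survives in the limit $r\to\infty$, linear in the metric perturbation, the first summand reproduces $m_{ADM}(g)$ while the second is governed by the leading term $\frac{4a}{n-2}|x|^{2-n}\delta_{ij}$ of the conformal perturbation. A direct evaluation of the ADM integral for a perturbation of the form $f\,\delta_{ij}$ with $f = \frac{4a}{n-2}|x|^{2-n}$ yields exactly $2a$; the cross terms between $g_{ij}-\delta_{ij}$ and the conformal factor decay fast enough (using $p > \frac{n-2}{2}$) that they drop out in the limit. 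This bookkeeping is the most computational step, and it is where I expect the main, though essentially routine, effort to lie.

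For the capacity, I would exploit the conformal Laplacian identity (\ref{eqn_conf_laplacian}): since $\Delta u = 0$, it reduces to $\Delta(u\phi) = u^{\frac{n+2}{n-2}} \ol\Delta\phi$, so $\phi$ is $\ol g$-harmonic if and only if $u\phi$ is $g$-harmonic. Letting $\ol\varphi$ be the $\ol g$-capacity potential (the $\ol g$-harmonic function vanishing on $\Sigma$ and tending to $1$ at infinity), the function $u\ol\varphi$ is then $g$-harmonic, vanishes on $\Sigma$ because $\ol\varphi|_\Sigma = 0$, and tends to $1$ at infinity because $u,\ol\varphi \to 1$; by uniqueness of the Dirichlet problem it must coincide with the $g$-capacity potential $\varphi$. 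Hence $\ol\varphi = \varphi/u$, and dividing the expansion $\varphi = 1 - C_g(\Sigma)|x|^{2-n} + O(|x|^{1-n})$ by that of $u$ gives $\ol\varphi = 1 - \big(C_g(\Sigma)+a\big)|x|^{2-n} + O(|x|^{1-n})$, so that $C_{\ol g}(\Sigma) = C_g(\Sigma) + a$. Combining the two transformations,
$$m_{ADM}(\ol g) - 2C_{\ol g}(\Sigma) = \big(m_{ADM}(g)+2a\big) - 2\big(C_g(\Sigma)+a\big) = m_{ADM}(g) - 2C_g(\Sigma),$$
which establishes the claimed invariance of $I_1$ across $[g]_h$.
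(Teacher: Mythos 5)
Your proposal is correct and follows essentially the same route as the paper: both hinge on the identification $\ol \varphi = \varphi/u$ via the conformal Laplacian identity and uniqueness for the Dirichlet problem, and on the cancellation of the monopole coefficient of $u$ between the mass and twice the capacity. The only differences are presentational --- the paper phrases both transformation laws in terms of the flux $\int_{S_\infty} \partial_\nu u \, dA$ rather than the expansion coefficient $a$ (the two are related by $\int_{S_\infty} \partial_\nu u \, dA = -(n-2)\omega_{n-1}a$), and it simply asserts the mass formula (\ref{eqn_conf_mass_infinity}) as following readily from the definitions, whereas you carry out the coordinate computation (correctly) in detail.
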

Note that $I_1$, the difference of ADM mass and twice the capacity, can be positive, negative, or zero.
\begin{proof}
From the definition of ADM mass and asymptotic flatness, it readily follows that the ADM masses of $\ol g=u^{k} g$ and $g$ (where $k=\frac{4}{n-2}$) are related by
\begin{equation}
\label{eqn_conf_mass_infinity}
m_{ADM}(\ol g) = m_{ADM}(g) - \frac{2}{(n-2)\omega_{n-1}} \int_{S_\infty} \partial_\nu u\, dA.
\end{equation}
The fact that the ADM mass minus the capacity is an invariant of $[g]_h$ then follows from the formula
\begin{equation}
\label{eqn_conf_capacity}
C_{\ol g}(\Sigma) = C_g(\Sigma) - \frac{1}{(n-2)\omega_{n-1}} \int_{S_\infty} \partial_{\nu} u dA,
\end{equation}
which we now prove.  Let $\varphi$ and $\ol \varphi$
be the harmonic functions (with respect to $g$ and $\ol g$, respectively) that vanish on $\Sigma$ and approach one at infinity.  Using formula (\ref{eqn_conf_laplacian}), one can check that $\varphi/u$ is harmonic with respect to $\ol g$, is zero on $\Sigma$, and approaches one at infinity.
Therefore by uniqueness, $\ol \varphi = \varphi/u$, so
\begin{align*}
 (n-2) \omega_{n-1} C_{\ol g}(\Sigma) &= \int_{S_\infty} \partial_{\ol \nu} (\varphi/u) \, \ol {dA},
\end{align*}
where $\ol{dA}$ and $\ol \nu$ are hypersurface measure and the unit normal with respect to $\ol g$.  Since $u \to 1$ and $\varphi \to 1$ at infinity, we have
\begin{align*}
 (n-2) \omega_{n-1} C_{\ol g}(\Sigma) &= \int_{S_\infty} \partial_{\nu} \left(\varphi/u\right) dA\\
	&= \int_{S_\infty} \partial_{\nu} \varphi dA - \int_{S_\infty} \partial_{\nu} u dA\\
	&=  (n-2) \omega_{n-1} C_{g}(\Sigma) - \int_{S_\infty} \partial_{\nu} u dA,
\end{align*}
proving (\ref{eqn_conf_capacity}).
\end{proof}

\begin{lemma}
\label{lemma_I2}
Suppose $(M,g)$ is asymptotically flat, of dimension $n \geq 3$.  Let $\varphi$ 
be the harmonic function (with respect to $g$) that vanishes on $\Sigma$ and approaches one at infinity.  Then the number
$$I_2:= \frac{2}{(n-2)^2} \left(\frac{1}{\omega_{n-1}} \int_\Sigma (\partial_\nu \varphi)^{\frac{2(n-1)}{n}} dA\right)^\frac{n}{n-1}$$
is an invariant of the harmonic conformal class of $g$.  That is, if $\ol g \in [g]_h$ with hypersurface measure $\ol{dA}$, unit normal
$\ol \nu$ to the boundary, and harmonic function $\ol \varphi$ that vanishes on $\Sigma$ and approaches one at infinity, then
\begin{equation}
\int_\Sigma (\partial_{\ol \nu} \ol \varphi)^{\frac{2(n-1)}{n}} \ol{dA} = \int_\Sigma (\partial_\nu \varphi)^{\frac{2(n-1)}{n}} dA.
\end{equation}
\end{lemma}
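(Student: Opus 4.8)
The plan is to reduce the claimed identity to a \emph{pointwise} equality of the two integrands on $\Sigma$, reusing the substitution $\ol\varphi = \varphi/u$ that already appeared in the proof of Lemma \ref{lemma_I1}. First I would record three elementary consequences of the conformal relation $\ol g = u^k g$ (with $k = \frac{4}{n-2}$) along the boundary:
\begin{enumerate}[(a)]
\item the induced area forms satisfy $\ol{dA} = u^{\,p}\, dA$, since the restricted metric scales by $u^k$ and its volume form therefore by $u^{k(n-1)/2} = u^{\,p}$;
\item the unit normals (both pointing into the manifold, toward infinity) satisfy $\ol\nu = u^{-k/2}\nu$, because $\ol g(\nu,\nu) = u^k$ forces the $\ol g$-normalization to carry a factor $u^{-k/2}$, with no sign ambiguity;
\item $\ol\varphi = \varphi/u$, exactly as derived in Lemma \ref{lemma_I1} from formula (\ref{eqn_conf_laplacian}) together with $\Delta u = \Delta\varphi = 0$ and the boundary/infinity conditions.
\end{enumerate}

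Next I would compute the normal derivative $\partial_{\ol\nu}\ol\varphi$ on $\Sigma$. Expanding $\partial_\nu(\varphi/u) = (u\,\partial_\nu\varphi - \varphi\,\partial_\nu u)/u^2$ and using that $\varphi \equiv 0$ on $\Sigma$, the second term drops out, so $\partial_\nu\ol\varphi = u^{-1}\partial_\nu\varphi$ there. Combining with (b) and $k/2 + 1 = \frac{2}{n-2} + 1 = \frac{n}{n-2}$ gives
\begin{equation*}
\partial_{\ol\nu}\ol\varphi = u^{-k/2}\,\partial_\nu\ol\varphi = u^{-(k/2+1)}\,\partial_\nu\varphi = u^{-\frac{n}{n-2}}\,\partial_\nu\varphi \qquad\text{on }\Sigma.
\end{equation*}
Since $\partial_\nu\varphi > 0$ on $\Sigma$ by the maximum principle, raising to the fractional power $\frac{2(n-1)}{n}$ is unproblematic.

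Finally I would substitute into the left-hand integrand and verify that the powers of $u$ cancel identically. Raising the displayed expression to the power $\frac{2(n-1)}{n}$ produces a factor $u^{-\frac{n}{n-2}\cdot\frac{2(n-1)}{n}} = u^{-p}$, which is precisely the reciprocal of the area-form factor $u^{\,p}$ from (a), so
\begin{equation*}
(\partial_{\ol\nu}\ol\varphi)^{\frac{2(n-1)}{n}}\,\ol{dA} = u^{-p}(\partial_\nu\varphi)^{\frac{2(n-1)}{n}}\cdot u^{\,p}\,dA = (\partial_\nu\varphi)^{\frac{2(n-1)}{n}}\,dA
\end{equation*}
pointwise on $\Sigma$; integrating yields the claim. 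There is no genuine obstacle here — the entire content is the bookkeeping of exponents. The one point worth emphasizing is that the exponent $\frac{2(n-1)}{n}$ in the definition of $I_2$ is forced by scale invariance: requiring the $u^{-n/(n-2)}$ from the normal derivative to cancel the $u^{\,p}$ from the area form means solving $-\frac{n}{n-2}\,t + p = 0$, whose unique solution is $t = \frac{p(n-2)}{n} = \frac{2(n-1)}{n}$. No other power of $\partial_\nu\varphi$ would produce a harmonic conformal invariant.
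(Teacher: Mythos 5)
Your proof is correct and follows the same route as the paper's: establish $\ol\varphi=\varphi/u$, $\ol\nu=u^{-\frac{2}{n-2}}\nu$, and $\ol{dA}=u^{p}\,dA$, compute $\partial_{\ol\nu}\ol\varphi=u^{-\frac{n}{n-2}}\partial_\nu\varphi$ using $\varphi|_\Sigma=0$, and observe that the powers of $u$ cancel pointwise as measures on $\Sigma$. The closing remark on why the exponent $\frac{2(n-1)}{n}$ is forced is a nice addition but not part of the paper's argument.
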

\begin{proof}
Suppose $\ol g = u^{k} g$ belongs to $[g]_h$.  As explained in the proof of Lemma \ref{lemma_I1}, we have the equality
$\ol \varphi = \varphi/u$.  Since lengths with respect to $\ol g$ and $g$ are related by a factor of $u^{\frac{2}{n-2}}$ pointwise, we see
$$\ol \nu = u^{-\frac{2}{n-2}} \nu,$$
and using the fact that $\varphi$ vanishes on $\Sigma$,
\begin{align*}
\partial_{\ol \nu} \ol \varphi &= u^{-\frac{2}{n-2}} \partial_\nu (\varphi/u)\\
	&= u^{-\frac{2}{n-2}} \frac{\partial_\nu \varphi}{u}\\
	&= u^{-\frac{n}{n-2}} \partial_\nu \varphi.
\end{align*}
Next, the hypersurface measures are related by $\ol{dA} = u^{\frac{2(n-1)}{n-2}} dA$, and it readily follows that 
$$(\partial_{\ol \nu} \ol \varphi)^{\frac{2(n-1)}{n}} \ol{dA} = (\partial_\nu \varphi)^{\frac{2(n-1)}{n}} dA$$
as measures on $\Sigma$.  In particular, the integrals over $\Sigma$ of these measures agree.
\end{proof}

\section{The mass profile function}
\label{sec_mass_profile}
In this section and the next, we fix $(M,g)$ as above and consider some problems that involve minimizing or maximizing certain geometric 
quantities within the harmonic conformal class of $g$.  Working with asymptotically flat manifolds, it is natural to consider the ADM mass as a geometric quantity to be optimized.  Recall that if $\ol g=u^{k} g$ belongs to $[g]_h$, then formula (\ref{eqn_conf_mass_infinity})
relates the ADM masses of $\ol g$ and $g$.  Using the fact that $u$ is harmonic with respect to $g$, we also see
\begin{equation}
\label{eqn_conf_mass}
m_{ADM}(\ol g) = m_{ADM}(g) - \frac{2}{(n-2)\omega_{n-1}} \int_{S_r} \partial_\nu u\, dA,
\end{equation}
for any coordinate sphere $S_r$.  The last term, including the minus sign, can be interpreted as twice the coefficient $a$ in the expansion of $u$ into spherical harmonics
for $|x|$ large:
$$u(x) = 1 + \frac{a}{|x|^{n-2}} + O(|x|^{1-n}).$$
 
From (\ref{eqn_conf_mass}), it is not difficult to see that the ADM mass can be made arbitrarily large for metrics $\ol g = u^{k}g$ in $[g]_h$ by choosing a harmonic conformal
factor $u$ that is large on $\Sigma$.  For such $u$, $\Sigma$ clearly has large $\ol g$-area.  
This motivates the question: how large can the ADM mass be made among metrics in $[g]_h$ that have a fixed
upper bound $A$ on the area of the boundary $\Sigma$?  Let $|\Sigma|_g$ denote the area (hypersurface measure)
of $\Sigma$ with respect to a metric $g$. We have the following definition.
\begin{definition}
\label{def_mu}
Given a number $A>0$, define
$$\mu(A) = \sup_{\ol g \in [g]_h} \{m_{ADM}(\ol g) : \; |\Sigma|_{\ol g} \leq A\}.$$
\end{definition}
The number $\mu(A)$ is the largest possible value of the ADM mass among metrics in $[g]_h$ that measure the boundary area to be at most $A$.
Below we will see that $\mu(A)$ is finite for each value of $A$, so in particular $\mu$ is
well-defined as a function $\R^+ \to \R$.  But first we make the observation:
\begin{lemma}
The function $\mu: \R^+ \to \R$ is independent of the choice of metric in $[g]_h$.
\end{lemma}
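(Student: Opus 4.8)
The plan is to reduce the statement to the observation that the harmonic conformal class is an honest equivalence class, so that the set over which the supremum is taken---together with both the objective functional and the constraint---does not see the choice of base point. Concretely, suppose $g' = v^{k} g \in [g]_h$, where $v > 0$ is $g$-harmonic with $v \to 1$ at infinity, and let $\mu'$ denote the function obtained by running Definition \ref{def_mu} with $g'$ in place of $g$. I want to show $\mu' = \mu$ as functions on $\R^+$.

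The first and essential step is to verify that $[g']_h = [g]_h$ as sets of metrics. Given $\ol g = w^{k} g' \in [g']_h$, with $w$ being $g'$-harmonic and $w \to 1$ at infinity, I would write $\ol g = (vw)^{k} g$ and check, via the conformal Laplacian identity (\ref{eqn_conf_laplacian}) applied to the pair $g' = v^{k} g$, that $vw$ is $g$-harmonic; since $vw \to 1$ at infinity as well, this places $\ol g$ in $[g]_h$, giving $[g']_h \subseteq [g]_h$. The reverse inclusion follows because $\sim$ is an equivalence relation (as recalled in the introduction), so the two classes coincide. The only mild care needed here is the bookkeeping of the normalization $u \to 1$ at infinity under composition of the conformal factors $v$ and $w$.

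With the sets identified, the remaining steps are immediate. The constraint $|\Sigma|_{\ol g} \leq A$ refers to the area of $\Sigma$ measured by $\ol g$ itself and is therefore intrinsic to $\ol g$, independent of whether we regard $\ol g$ as living in $[g]_h$ or in $[g']_h$. Likewise, by the work of Bartnik recalled in Section \ref{sec_definitions}, the ADM mass $m_{ADM}(\ol g)$ is a geometric invariant of $\ol g$ and does not depend on the choice of base metric. Hence in both definitions one takes the supremum of the same functional $\ol g \mapsto m_{ADM}(\ol g)$ over the same constrained subset $\{\ol g : |\Sigma|_{\ol g} \leq A\}$ of the same class, so $\mu'(A) = \mu(A)$ for every $A > 0$.

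I do not expect any genuine obstacle in this argument; the content lies entirely in recognizing that $[g]_h$ is well-defined as an equivalence class and that mass and area are attributes of each individual metric rather than of the labeling. The one point deserving explicit verification is the set equality $[g']_h = [g]_h$, and in particular that the product $vw$ of two admissible conformal factors is again $g$-harmonic and tends to $1$ at infinity.
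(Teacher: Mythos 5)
Your argument is correct and is essentially the paper's own (the paper simply declares the lemma trivial for exactly the reasons you spell out: the supremum is of a geometric quantity, subject to a geometric constraint, over the whole class, which is well-defined as an equivalence class). Your explicit verification that $[g']_h = [g]_h$ via formula (\ref{eqn_conf_laplacian}) and the normalization at infinity is the right bookkeeping and matches the equivalence-relation observation recalled in the introduction.
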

So we say that $\mu$ is an invariant of the harmonic conformal class of $g$ and call $\mu$ the \emph{mass profile function} of $[g]_h$.
The proof of the lemma is trivial: $\mu$ is formed by maximizing a geometric quantity
(ADM mass) subject to a geometric constraint (upper bound for area) over the whole harmonic conformal class.

Before moving on, we remark that $\mu$ may be interpreted purely in terms of the behavior of harmonic functions on $M$:
\begin{align*}
 \mu(A) &= \sup_{u} \left\{m_{ADM}(g) - \frac{2}{(n-2)\omega_{n-1}} \int_{S_\infty} \partial_\nu u\, dA :\right.\\
	&\qquad \qquad \left. u>0,\Delta u = 0, u \to 1 \text{ at infinity, and } \int_\Sigma u^{p} dA \leq A\right\},
\end{align*}
where $p=\frac{2(n-1)}{n-2}$.  That is, $\mu(A)$ is essentially found by maximizing the coefficient of the $\frac{1}{|x|^{n-2}}$ term in the expansion at infinity of positive harmonic functions
$u$ with an $L^p$ upper bound for $u|_\Sigma$.  However, from this viewpoint it is not transparent that $\mu$ is an invariant of $[g]_h$.

In the following theorem we give a complete understanding of $\mu(A)$ by proving an explicit formula in terms of the numerical 
invariants $I_1$ and $I_2$ introduced above.  In the course of the proof, we show that given $A$, there exists a unique metric
in $[g]_h$ attaining the supremum for $\mu(A)$.
\begin{thm}
\label{thm_mu}
For $A>0$, we have the formula
$$\mu(A) = I_1 + \left(2I_2\right)^{1/2} \left(\frac{A}{\omega_{n-1}}\right)^{1/p}.$$
In particular, $\mu:\R^+ \to \R$ is a smooth, increasing function that is bounded below and unbounded above.  Moreover, the function
$\mu$ is completely determined by the numerical invariants $I_1$ and $I_2$ defined in Lemmas \ref{lemma_I1} and \ref{lemma_I2}.
\end{thm}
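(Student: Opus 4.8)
The plan is to convert the geometric optimization defining $\mu(A)$ into an elementary extremal problem for a linear functional under an $L^p$ constraint, and then apply Hölder's inequality. First I would pass from a metric $\ol g = u^k g \in [g]_h$ to its harmonic conformal factor $u$: by formula (\ref{eqn_conf_mass}) the mass is $m_{ADM}(\ol g) = m_{ADM}(g) - \frac{2}{(n-2)\omega_{n-1}}\int_{S_\infty}\partial_\nu u\, dA$, while the boundary area is $|\Sigma|_{\ol g} = \int_\Sigma u^p\, dA$, as recorded in the reformulation of $\mu$ following Definition \ref{def_mu}. The key reduction is to express the mass purely in terms of the boundary trace $f := u|_\Sigma$. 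Applying Green's second identity to the harmonic functions $u$ and $\varphi$ on the region between $\Sigma$ and a large coordinate sphere $S_r$, and using $\varphi|_\Sigma = 0$ together with $u, \varphi \to 1$ at infinity, I expect to obtain $\int_{S_\infty}\partial_\nu u\, dA = (n-2)\omega_{n-1}C_g(\Sigma) - \int_\Sigma u\,\partial_\nu\varphi\, dA$. Substituting and recalling that $I_1 = m_{ADM}(g) - 2C_g(\Sigma)$ then gives the clean identity $m_{ADM}(\ol g) = I_1 + \frac{2}{(n-2)\omega_{n-1}}\int_\Sigma f\,\partial_\nu\varphi\, dA$, exhibiting the mass as $I_1$ plus a \emph{linear} functional of the boundary values.

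With this in hand, computing $\mu(A)$ amounts to maximizing $\int_\Sigma f\,\partial_\nu\varphi\, dA$ over positive functions $f$ on $\Sigma$ subject to $\int_\Sigma f^p\, dA \le A$. Since $\partial_\nu\varphi > 0$ on $\Sigma$ (noted above), Hölder's inequality with conjugate exponent $p' = \frac{p}{p-1} = \frac{2(n-1)}{n}$ yields the sharp bound $\int_\Sigma f\,\partial_\nu\varphi\, dA \le A^{1/p}\left(\int_\Sigma (\partial_\nu\varphi)^{p'}\, dA\right)^{1/p'}$, with equality precisely when $f^p$ is proportional to $(\partial_\nu\varphi)^{p'}$ and the area constraint is saturated. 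This determines the optimal trace uniquely as $f^\ast = \lambda\,(\partial_\nu\varphi)^{1/(p-1)}$ for the unique $\lambda > 0$ making $\int_\Sigma (f^\ast)^p\, dA = A$. Because $\partial_\nu\varphi$ is smooth and positive, $f^\ast$ is a smooth positive boundary function, and its harmonic extension $u^\ast$ (normalized to tend to $1$ at infinity) is smooth, positive by the maximum principle, and asymptotically flat; hence $(u^\ast)^k g \in [g]_h$ realizes the supremum, establishing both existence and uniqueness of the optimal metric.

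It remains to match the extremal value with the stated formula. The exponent $p' = \frac{2(n-1)}{n}$ is exactly the one appearing in the integrand defining $I_2$ in Lemma \ref{lemma_I2}, so $\int_\Sigma(\partial_\nu\varphi)^{p'}\, dA$ is precisely that quantity. A short bookkeeping of powers — noting in particular that $\frac{1}{p} + \frac{1}{p'} = 1$, so that the powers of $\omega_{n-1}$ coming from $(A/\omega_{n-1})^{1/p}$ and from $I_2$ combine to the single factor $1/\omega_{n-1}$ — converts $I_1 + \frac{2}{(n-2)\omega_{n-1}}A^{1/p}\left(\int_\Sigma(\partial_\nu\varphi)^{p'}\, dA\right)^{1/p'}$ into the claimed $I_1 + (2I_2)^{1/2}\left(A/\omega_{n-1}\right)^{1/p}$. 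The remaining assertions then follow at once from this explicit expression: $\mu$ is smooth and strictly increasing in $A$ because $I_2 > 0$ and $1/p > 0$; it is bounded below, approaching $I_1$ as $A \to 0^+$, and unbounded above as $A \to \infty$; and it manifestly depends only on $I_1$ and $I_2$.

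The main obstacle I anticipate is the rigorous treatment of the boundary terms at infinity in the Green's identity step: one must invoke the asymptotic-flatness decay of Definition \ref{def_AF} and the spherical-harmonic expansion of $u$ near infinity (via the Poisson-kernel analysis cited in Section \ref{sec_definitions}) to justify replacing $u$ and $\varphi$ by $1$ in the limit and to show that the cross terms on $S_r$ vanish as $r \to \infty$. A secondary point requiring care is verifying that the Hölder optimizer is genuinely admissible — that the harmonic extension of $f^\ast$ preserves positivity and the asymptotic-flatness decay conditions — so that $(u^\ast)^k g$ truly lies in $[g]_h$ rather than merely solving the relaxed functional problem.
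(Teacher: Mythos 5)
Your proposal is correct, and it reaches the formula by a genuinely more direct route than the paper. Both arguments hinge on the same key identity — your Green's identity computation is exactly the content of Lemma \ref{lemma_compute_term}, which expresses the mass of $u^k g$ as $I_1 + 2\int_\Sigma V f\, dA$ with $V = \frac{1}{(n-2)\omega_{n-1}}\partial_\nu\varphi$ — but from that point the two proofs diverge. The paper first derives an Euler--Lagrange condition for a hypothetical maximizer (property (\ref{eqn_critical_property})), uses it to guess the boundary data $f_0 \propto V^{(n-2)/n}$, and then needs a separate concavity argument (convexity of the $L^p$ norm along the normalized linear interpolation $f_t$) to upgrade the critical point to a global maximum and to get uniqueness. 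You instead observe that the mass is an affine functional of the trace $f$ and apply H\"older's inequality with conjugate exponent $p' = \frac{2(n-1)}{n}$, which delivers the sharp upper bound, identifies the extremal $f^\ast \propto (\partial_\nu\varphi)^{(n-2)/n}$ (the same function as the paper's $f_0$), and gives uniqueness all at once via the equality case. Your exponent bookkeeping checks out: $1/p' = \frac{n}{2(n-1)}$ is the outer exponent in $I_2$, and $1/p + 1/p' = 1$ merges the two powers of $\omega_{n-1}$ as claimed. The admissibility point you flag is genuine but easy: since $\partial_\nu\varphi$ is smooth and strictly positive on $\Sigma$ (maximum principle), $f^\ast$ is smooth and positive, and its harmonic extension lies in $[g]_h$ by the spherical-harmonics expansion already invoked in Section \ref{sec_definitions}. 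What your approach buys is brevity and the elimination of the variational machinery; what the paper's approach buys is a template (critical point plus concavity) that the author explicitly contrasts with the area profile function $\alpha$ in Section \ref{sec_area_profile}, where no linear-functional reduction is available — so the longer proof is partly expository scaffolding for what follows.
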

\begin{proof}
Fix $A>0$, and suppose initially that $\ol g=u_0^{k} g$ attains the supremum in the definition of $\mu(A)$ and satisfies
the area bound $|\Sigma|_{\ol g} \leq A.$  We claim that $|\Sigma|_{\ol g} = A$. For, if $|\Sigma|_{\ol g} < A$, then by adding
a small constant to boundary data for $u_0$, we could construct a metric in $[g]_h$ with boundary area equal to $A$.  By the maximum
principle and formula (\ref{eqn_conf_mass_infinity}), the ADM mass of this new metric would exceed that of $\ol g$, so that the latter could
not attain the supremum for $\mu(A)$.  So $|\Sigma|_{\ol g} = A$.

We show that $u_0$ satisfies a variational principle.  For $t \in (-\epsilon,\epsilon)$,
let $u_t$ be a smoothly-varying path in the space of positive harmonic functions on $M$, passing through $u_0$ at $t=0$, such that
\begin{enumerate}[(i)]
\item for each $t$, $u_t$ approaches one at infinity, and
\item $\int_{\Sigma} u_t^{p} dA \equiv A$ for all $t$.
\end{enumerate}
This is equivalent to stating that $u_t^{k} g$ is a smooth path of metrics in $[g]_h$ that fixes the boundary area at 
the value $A$.  Since $u_0^{k} g$ maximizes $\mu(A)$ among metrics with
boundary area $A$, the smooth function
$$t \mapsto m_{ADM}\left(u_t^{k} g\right)$$
has a local maximum at $t=0$.  Using formula (\ref{eqn_conf_mass}), we have 
\begin{align*}
0 &=\frac{d}{dt} m_{ADM}\left(u_t^{k} g\right)\Big|_{t=0}\\
&=\frac{d}{dt} \left(m_{ADM}(g) - \frac{2}{(n-2)\omega_{n-1}} \int_{S_r} \partial_\nu u_t dA\right)\Big|_{t=0}\\
&= -\frac{2}{(n-2)\omega_{n-1}} \int_{S_r} \partial_\nu w_0  dA,
\end{align*}
where $w_0(x) = \frac{d}{dt}u_t(x)\big|_{t=0}.$  Also, since the boundary area is constant in $t$,
$$0 = \frac{d}{dt}|\Sigma|_{u_t^{k} g}\Big|_{t=0} = \frac{d}{dt}\int_\Sigma u_t^{p} dA\Big|_{t=0} = p \int_\Sigma u_0^{\frac{n}{n-2}} w_0 dA.$$
Observe that $w_0$ is a smooth harmonic function on $M$ that approaches zero at infinity.  Given any such $w_0$
satisfying $\int_\Sigma u_0^{\frac{n}{n-2}} w_0 dA=0$, it is possible to construct a smooth family of harmonic functions $u_t$ with $\frac{d}{dt} u_t\big|_{t=0} = w_0$ satisfying properties (i) and (ii) above.  We now see that the unknown harmonic function $u_0$ (if it exists) satisfies the statement:
\begin{quotation}
If $w_0$ is any harmonic function on $M$ that approaches zero at infinity, and 
if $\int_\Sigma u_0^{\frac{n}{n-2}} w_0 dA = 0$, then $\int_{S_r} \partial_\nu w_0 dA=0$ as well.
\end{quotation}
To make this more concrete, Lemma \ref{lemma_compute_term} below shows how to compute $\int_{S_r} \partial_\nu w_0 dA$ solely from the boundary data for $w_0$: there exists a smooth,
positive function $V$ on $\Sigma$ such that for all harmonic functions $w_0$ on $M$ that approach zero at infinity, we have
$$- \frac{1}{(n-2)\omega_{n-1}} \int_{S_r} \partial_\nu w_0 dA = \int_\Sigma V w_0 dA.$$
Writing $\psi = {w_0}|_\Sigma$ we see that $u_0$ satisfies the property: 
\begin{equation}
\text{If $\psi$ is a smooth function on $\Sigma$ with $\int_\Sigma u_0^{\frac{n}{n-2}} \psi dA = 0$,
then $\int_\Sigma V\psi dA =0$.}
\label{eqn_critical_property}
\end{equation}
The trick is to utilize this observation to determine what $u_0$ should be: we will \emph{define} $u_0$ to be harmonic, one at infinity with boundary data given by a constant times $V^{\frac{n-2}{n}}$, so that $u_0$ satisfies (\ref{eqn_critical_property}) automatically.  Specifically, let $f_0$ be the function on $\Sigma$ given by:
$$f_0(x)=\frac{A^{\frac{n-2}{2(n-1)}}}{\left(\int_\Sigma V^{\frac{2(n-1)}{n}} dA\right)^{\frac{n-2}{2(n-1)}}}V(x)^{\frac{n-2}{n}},$$
and let $u_0$ be harmonic, one at infinity, with boundary data $f_0$.  Then the metric $u_0^{k} g \in [g]_h$ has boundary area equal to $A$ and is, by the above computations, a critical point for the ADM mass among metrics in $[g]_h$ with boundary area equal to $A$.

The next step is to show $u_0^{k} g$ indeed attains the supremum for $\mu(A)$.  We do so by showing the ADM mass satisfies a concavity property on paths of metrics in $[g]_h$ that fix the boundary area.  Let $f_1$ be any smooth, positive function on $\Sigma$ distinct from $f_0$ that serves as boundary data for a harmonic function $u_1$ that approaches one at infinity.  Assume $\int_\Sigma f_1^{p} dA = A$. 
To consider a path between $f_0$ and $f_1$ that fixes the $L^{p}$ norm, we define for $t \in [0,1]$:
$$f_t(x) = \frac{A^{\frac{n-2}{2(n-1)}}}{\|(1-t)f_0 + t f_1\|_{L^{p}}} \big[(1-t)f_0(x) + tf_1(x)\big].$$
Let $u_t$ be the harmonic function, one at infinity, with boundary data $f_t$, so $u_t^{k} g$ is a path in $[g]_h$ for $t \in[0,1]$
that has boundary area $A$ for all $t$.  By convexity of the $L^{p}$ norm, we have
\begin{equation}
\label{eqn_convexity}
f_t(x)  \geq (1-t)f_0(x) + tf_1(x).
\end{equation}
Then certainly
$$\int_\Sigma Vf_t dA  \geq (1-t) \int_\Sigma V f_0 dA + t\int_\Sigma V f_1 dA.$$
Applying Lemma \ref{lemma_compute_term} to each of these three terms, adding $\frac{1}{2}m_{ADM}(g)$ to both sides, 
and using formula (\ref{eqn_conf_mass_infinity}), we have
\begin{equation}
\label{eqn_concavity}
m_{ADM}\left(u_t^{k} g\right) \geq (1-t)\,m_{ADM}\left(u_0^{k} g\right) + t\,m_{ADM}\left(u_1^{k} g\right).
\end{equation}
Note that the inequality is strict for $t \in (0,1)$ (because (\ref{eqn_convexity}) is) and equality holds at the endpoints.    

If $u_0^{k} g$ were not a maximizer for $\mu(A)$, then there exists $u_1^{k} g \in [g]_h$ with boundary area $A$ and
$$m_{ADM}(u_1^{k} g) > m_{ADM}(u_0^{k} g).$$
Differentiating (\ref{eqn_concavity}) at $t=0$ we see that
$$\frac{d}{dt} m_{ADM}(u_t^{k} g)\big|_{t=0} > 0,$$
contradicting the fact that $u_0^{k} g$ is a critical point for the ADM mass among metrics in $[g]_h$ with the same boundary area.  This shows that $u_0^{k} g$ is indeed a maximizer for $\mu(A)$, and (\ref{eqn_concavity}) also shows that the maximizer is unique.

Finally, we compute $\mu(A)$ as the ADM mass of $u_0^{k}g$ and simplify.  Using formula (\ref{eqn_conf_mass_infinity}) again, as well
as Lemma \ref{lemma_compute_term}, 
\begin{align*}
\mu(A) &= m_{ADM}(u_0^{k} g)\\
	&= m_{ADM}(g) - \frac{2}{(n-2)\omega_{n-1}} \int_{S_\infty} \partial_\nu u_0 dA\\
	&= m_{ADM}(g) + 2\int_\Sigma V (u_0 - 1) dA,
\end{align*}
since $u_0-1$ is harmonic, approaching zero at infinity.  
The integral $\int_\Sigma V u_0 dA$ can be simplified based on our definition of $f_0 = u_0|_\Sigma$.  It is straightforward to check
that
$$\int_\Sigma V u_0 dA = A^{\frac{n-2}{2(n-1)}} \left(\int_\Sigma V^{\frac{2(n-1)}{n}} dA\right)^{\frac{n}{2(n-1)}}.$$
Also, applying Lemma \ref{lemma_compute_term} to the harmonic function $1-\varphi$ that is one on $\Sigma$ and zero at infinity,
we see
$$\int_\Sigma V dA = C_g(\Sigma).$$
Putting it all together, we have
$$\mu(A) = m_{ADM}(g) - 2C_g(\Sigma) + 2A^{\frac{n-2}{2(n-1)}} \left(\int_\Sigma V^{\frac{2(n-1)}{n}} dA\right)^{\frac{n}{2(n-1)}}.$$
Using the definition of $I_1$ and $I_2$, as well as the fact that $V = \frac{1}{(n-2)\omega_{n-1}} \partial_\nu \varphi$ (see
the proof of Lemma \ref{lemma_compute_term}), we obtain
$$\mu(A) = I_1 + \left(2I_2\right)^{1/2} \left(\frac{A}{\omega_{n-1}}\right)^{\frac{n-2}{2(n-1)}}.$$
\end{proof}

\begin{lemma}
\label{lemma_compute_term}
Suppose $(M,g)$ is asymptotically flat with boundary $\Sigma$.  Then there exists a unique smooth, positive function $V$ on $\Sigma$ such that
for all harmonic functions $w$ that approach zero at infinity,
$$- \frac{1}{(n-2)\omega_{n-1}} \int_{S_\infty} \partial_\nu w dA = \int_\Sigma V w dA.$$
\end{lemma}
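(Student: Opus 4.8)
The plan is to exhibit the function $V$ explicitly and then check its two defining properties (the integral identity and uniqueness) directly. Motivated by the flux interpretation of capacity used earlier, the natural candidate is
$$V = \frac{1}{(n-2)\omega_{n-1}}\,\partial_\nu \varphi\Big|_\Sigma,$$
where $\varphi$ is the capacity potential, i.e.\ the harmonic function vanishing on $\Sigma$ and approaching one at infinity. This $V$ is smooth because $\varphi$ is smooth up to $\Sigma$ by elliptic boundary regularity, and it is strictly positive because $\partial_\nu\varphi>0$ on $\Sigma$ by the Hopf maximum principle, as already noted above. Thus the real content is the integral identity.

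First I would reduce the flux at infinity to a flux across $\Sigma$. Since $w$ is harmonic and $\nu$ points toward infinity on every coordinate sphere as well as on $\Sigma$, the divergence theorem applied to the region between $\Sigma$ and $S_r$ gives $\int_{S_\infty}\partial_\nu w\,dA = \int_\Sigma \partial_\nu w\,dA$; this is exactly the computation already performed for $C_g(\Sigma)$. It therefore suffices to prove that $\int_\Sigma \partial_\nu w\,dA = -\int_\Sigma (\partial_\nu\varphi)\,w\,dA$.

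The key step is Green's second identity applied to the pair $w$ and $1-\varphi$ on the region $\Omega_r$ bounded by $\Sigma$ and $S_r$. Both functions are harmonic, so the interior term vanishes and only boundary integrals survive. On $\Sigma$ one has $1-\varphi=1$ and $\partial_\nu(1-\varphi)=-\partial_\nu\varphi$, and after accounting for the inward orientation of $\partial\Omega_r$ along $\Sigma$ this contributes precisely $\int_\Sigma\big(\partial_\nu w + w\,\partial_\nu\varphi\big)\,dA$. The remaining contribution is the integral over $S_r$ of $w\,\partial_\nu(1-\varphi) - (1-\varphi)\,\partial_\nu w$, and letting $r\to\infty$ will yield the claimed formula once that term is shown to vanish. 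The main obstacle, and the only genuinely analytic point, is exactly this vanishing of the $S_r$ term. Here I would invoke the spherical-harmonic expansions at infinity guaranteed by asymptotic flatness: $w = b\,|x|^{2-n}+O(|x|^{1-n})$ and $1-\varphi = C_g(\Sigma)\,|x|^{2-n}+O(|x|^{1-n})$, together with the matching estimates on their radial derivatives. Each of the two products on $S_r$ then decays like $|x|^{3-2n}$ while the area of $S_r$ grows like $|x|^{n-1}$, so both integrals are $O(|x|^{2-n})\to 0$. Some care is needed to pass between $\partial_\nu$, $dA$ and their coordinate expressions, but the decay rates in Definition \ref{def_AF} are more than sufficient.

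Finally, uniqueness is immediate. If $V'$ also satisfies the stated identity, then $\int_\Sigma (V-V')\,\psi\,dA = 0$ for every function $\psi$ occurring as $w|_\Sigma$; since the Dirichlet problem for Laplace's equation with zero condition at infinity is solvable for arbitrary smooth boundary data, $\psi$ ranges over all smooth functions on $\Sigma$, and the fundamental lemma of the calculus of variations forces $V\equiv V'$. Assembling these pieces gives $V = \frac{1}{(n-2)\omega_{n-1}}\,\partial_\nu\varphi$, the expression used at the end of the proof of Theorem \ref{thm_mu}.
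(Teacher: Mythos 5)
Your proposal is correct and takes essentially the same route as the paper: both identify $V=\frac{1}{(n-2)\omega_{n-1}}\partial_\nu\varphi$ and derive the identity from Green's second identity applied to $w$ and the capacity potential (the paper pairs $w$ with $\varphi$ so the flux at infinity appears directly, while you pair it with $1-\varphi$ and separately move the flux from $S_\infty$ to $\Sigma$ — a cosmetic difference). Your explicit decay analysis of the $S_r$ boundary term is a welcome bit of extra care that the paper leaves implicit.
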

In other words, $V$ represents the linear functional on $C^\infty(\Sigma)$ that maps the boundary values of any harmonic function $w$ vanishing at infinity
to the coefficient of the $\frac{1}{|x|^{n-2}}$ term in the expansion of $w$ into spherical harmonics.  Note that
since $w$ is harmonic, the integral over $S_\infty$ may be replaced with the integral over any coordinate sphere
$S_r$.
\begin{proof}
Uniqueness is readily verified; we proceed to derive a formula for $V$.  Let $w$ be a harmonic function with smooth boundary data that approaches zero at infinity.  Let $\varphi$ be 
harmonic, zero on $\Sigma$ and
one at infinity.  Then by harmonicity we have
\begin{align*}
 0 &= \int_M \big[\div(w \nabla \varphi) - \div(\varphi \nabla w)\big] dV,
\end{align*}
where $\div, \nabla$ and $dV$ are the divergence, gradient, and volume measure with respect to $g$.  By the divergence theorem, this becomes
\begin{align*}
 0 &= \int_{S_\infty} w \partial_\nu \varphi \, dA - \int_{\Sigma} w \partial_\nu \varphi \, dA 
	- \int_{S_\infty} \varphi \partial_\nu w dA + \int_{\Sigma} \varphi \partial_\nu w  dA.
\end{align*}
Using the fact that $w$ approaches zero at infinity and $\varphi$ vanishes on $\Sigma$ and approaches one at infinity, we conclude
$$-\int_{S_\infty} \partial_\nu w dA = \int_{\Sigma} w \partial_\nu \varphi \, dA.$$
Choosing $V = \frac{1}{(n-2)\omega_{n-1}}\partial_\nu \varphi$, the proof is complete.
\end{proof}

\begin{aside}
After having studied the function $\mu(A)$, it is natural to consider the problem of \emph{minimizing} the ADM mass within $[g]_h$
among metrics having a fixed \emph{lower bound} on the boundary area:
$$\underline{\mu}(A)=\inf_{\ol g \in [g]_h} \{m_{ADM}(\ol g) : \; |\Sigma|_{\ol g} \geq A\}.$$
This infimum defines a function of $A$ that is an invariant of $[g]_h$.  We claim that $\underline{\mu}(A)$ is the constant function equal to $I_1$.  First, suppose
$\ol g=u^{k} g \in [g]_h$.  From formula (\ref{eqn_conf_mass}) and the maximum principle, we see
\begin{align*}
m_{ADM}(\ol g) 
&\geq m_{ADM}(g) -  \frac{2}{(n-2) \omega_{n-1}} \int_{S_r} \partial_\nu (\varphi) dA,
\end{align*}
which equals $I_1$, so $\underline{\mu}(A) \geq I_1$.  To prove equality, it is enough to find a sequence $\{u_i^{k} g\}$ in $[g]_h$ such that each metric in the sequence has boundary area $A$ and such that $u_i$ converges pointwise to $\varphi$ on the interior of $M$.  Then by harmonicity, this would then show that
$\int_{S_r} \partial_\nu (u_i) dA \to \int_{S_r} \partial_\nu (\varphi) dA.$  We leave it to the reader to construct such a sequence by letting
$u_i$ have boundary data $f_i$, where $f_i^{p}$ approximates a Dirac delta of measure $A$ as $i \to \infty$.
\end{aside}

\section{The area profile function}
\label{sec_area_profile}
In this section we introduce a function $\alpha:\R^+ \to \R^+$ whose definition formally appears similar to that of the mass profile 
function $\mu$; the idea is still to maximize a geometric quantity over $[g]_h$ subject to a boundary area constraint.  We will see, however, that $\alpha$ is much more subtle than $\mu$ in a number of respects.

Given an asymptotically flat manifold $(M,g)$, a \emph{surface} $S$ \emph{enclosing} the boundary $\Sigma$ is a set that differs 
(in the sense of integral currents) from $\Sigma$ by the boundary of a bounded region $\Omega \subset M$ with $(n-1)$-rectifiable boundary of finite 
$\Hscr^{n-1}$ measure:
$$S = \Sigma + \partial \Omega.$$
Here, $\Hscr^{n-1}$ is Hausdorff $(n-1)$-measure with respect to $g$; we also use the notation
$|S|_g=\Hscr^{n-1}(S)$, which we call the \emph{area} of $S$.
We define the \emph{minimal enclosing area} of the boundary $\Sigma$ with respect to $g$ to be the number:
\begin{equation}
\label{eqn_min_enclosing_area}
\min(\Sigma,g) = \inf_S \left\{|S|_g \; : \; S \text{ is a surface enclosing } \Sigma\right\}.
\end{equation}
From standard results in geometric measure theory, this infimum is attained by at least one surface (this uses
asymptotic flatness and the Federer--Fleming compactness theorem for integral currents; see \cite{simon} for instance).
Such a surface $S$ is called a \emph{minimal area enclosure} of $\Sigma$
with respect to $g$.  In low dimensions $3 \leq n \leq 7$, $S$ has $C^{1,1}$ regularity, and $S \sm \Sigma$, if non-empty, is a $C^\infty$ minimal (zero mean curvature) surface.
We remark that there exists a unique \emph{outermost} minimal area enclosure that we denote by $\tilde \Sigma_g$.    For more details on existence, uniqueness, and regularity see section 1 of \cite{imcf},
which uses the terminology of minimizing hulls.

The minimal enclosing area of $\Sigma$ is a geometric quantity, and we consider the problem of optimizing it within the harmonic conformal class.
Like the ADM mass, the number $\min(\Sigma,g)$ takes on arbitrarily large values within $[g]_h$ -- but this is
not true if we restrict to metrics in $[g]_h$ with an upper bound for the boundary area. This motivates the preliminary definition: for $A>0$, let
\begin{equation}
\label{eqn_alpha_0}
\alpha_{\circ}(A) = \sup_{\ol g \in [g]_h} \{\min(\Sigma, \ol g) : \; |\Sigma|_{\ol g} \leq A\}.
\end{equation}
In words, $\alpha_{\circ}(A)$ is the maximum possible value of the minimal enclosing area for metrics in $[g]_h$ that have boundary area at most $A$.  In sections \ref{sec_properties} and \ref{sec_applications} we will give more geometric motivation for why optimizing the minimal area enclosure is of interest.

It is clear that $\alpha_{\circ}$ defines a function $\R^+ \to \R^+$ that is nondecreasing and satisfies $\alpha_{\circ}(A) \leq A$.  
(To see the last point, note that if the boundary area is at most $A$, then the least area needed to enclose the boundary is certainly no more than $A$.)
What is far from clear is that the supremum for $\alpha_{\circ}(A)$ is attained.  A few essential differences between the functions $\alpha_{\circ}$ and $\mu$
now come to the surface:
\begin{enumerate}[(i)]
 \item Suppose $\ol g=u_0^{k}g \in [g]_h$ attains the supremum for $\alpha_{\circ}(A)$.  Consider a smooth path $\ol g_t = u_t^{k}g$
in $[g]_h$ passing through $\ol g$ at $t=0$ and preserving the condition that the boundary area is at most $A$.  In general, particularly in the case
in which there exist multiple minimal area enclosures of $\Sigma$, it is entirely possible that the function
\begin{equation}
\label{eqn_min_t}
t \mapsto \min(\Sigma, \ol g_t)
\end{equation}
is not differentiable at $t=0$.  In other words, a maximizing metric $\ol g$ does not obviously satisfy a variational property.  
\item Even in the case in which $\frac{d}{dt}\big|_{t=0} \min(\Sigma, \ol g_t) = 0$, 
the resulting variational statement is not particularly useful: it leads to a statement on the behavior of harmonic functions restricted to the 
surface $\tilde \Sigma_{\ol g}$, rather than the boundary.  Moreover, it seems
highly unlikely that a formula for $\alpha_{\circ}(A)$ could be given in terms of the numerical invariants $I_1$ and $I_2$.  This all contrasts sharply with the 
case of $\mu(A)$ (c.f. Theorem \ref{thm_mu}).
 \item Concavity: in the proof of Theorem \ref{thm_mu}, we saw the ADM mass satisfies a concavity property that allowed us to show that a critical point was
	necessarily a global maximum.  The minimal enclosing area evidently satisfies no such property, essentially for the reason that the area of a surface
	$S$ with respect to $u^{k} g$ is a \emph{convex} function of $u$.
\end{enumerate}
Despite these difficulties, we would still like to maximize the minimal enclosing area within $[g]_h$ in the same spirit as in (\ref{eqn_alpha_0}).  To carry this out, we will 
enlarge the space $[g]_h$ so as to obtain a space with a useful compactness property, a step that was unnecessary for $\mu(A)$.

\subsection{The generalized harmonic conformal class}
For the purposes of maximizing $\min(\Sigma, \ol g)$ as in equation (\ref{eqn_alpha_0}), 
we enlarge the set $[g]_h$ as follows by allowing metrics with weaker boundary regularity. 
Let $f \geq 0$ belong to $L^{p}(\Sigma)$ (with respect to the hypersurface measure $dA$ induced by $g$).  
The reason for considering $L^{p}$ with $p=\frac{2(n-1)}{n-2}$
is that for smooth conformal metrics $\ol g=u^{k}g$, the area measures on hypersurfaces are related 
by $\ol{dA} = u^{p} dA.$
For $x$ in the interior of $M$, define
\begin{equation}
\label{eqn_assoc_harmonic}
u(x) = \varphi(x) + \int_\Sigma K(x,y) f(y) dA(y),
\end{equation}
where $K(x,y)$ is the Poisson kernel for $(M,g)$ (where $x \in M$, $y \in \Sigma$ and $x \neq y$), and $\varphi(x)$ is the unique $g$-harmonic
function that vanishes on $\Sigma$ and approaches one at infinity (c.f. section \ref{sec_definitions}).
In particular, $u$ is $g$-harmonic in the interior of $M$ and tends to one at infinity.  We call $u$ the \emph{harmonic function associated to} 
$f$.  Since $f$ is determined uniquely by $u$ (up to almost-everywhere equivalence), we also say $f$ is the function in 
$L^{p}(\Sigma)$ \emph{associated to} $u$.

\begin{remark}
\label{remarks_harmonic}
The function $u$ defined in (\ref{eqn_assoc_harmonic}) is smooth and positive in $M\sm \Sigma$, but need not extend continuously to $\Sigma$.
Also, while it is not clear that the trace of $u$ onto $\Sigma$ is defined (in the sense of Sobolev spaces), it is the case that for 
almost all $y \in \Sigma$, given a path $\gamma:[0,\epsilon) \to M$ with $\gamma(0)=y$ and $\gamma'(0)$ transverse to $\Sigma$, 
$u \circ \gamma(t)$ converges to $f(y)$ as $t\to 0^+$. This is true essentially because for $t>0$, the function $y \mapsto K(\gamma(t),y)$ 
forms an approximation to the identity on $\Sigma$, based at $y$, in the limit $t \to 0^+$.
\end{remark}
On $M \sm \Sigma$, $u^{k} g$ is a smooth Riemannian metric, and we make the following definition.
\begin{definition}
\label{def_ghcc}
The \textbf{generalized harmonic conformal class} of $g$ is the set $\ol{[g]_h}$ of all Riemannian metrics $u^{k} g$ on $M \sm \Sigma$, where
$u$ is the harmonic function associated to some nonnegative $f \in L^{p}(\Sigma)$ as in (\ref{eqn_assoc_harmonic}).
\end{definition}
We may think of $\ol{[g]_h}$ as the closure of $[g]_h$ with respect to the $L^{p}(\Sigma)$ norm.  
We remark that both of the sets $\ol {[g]_h}$ and $L^{p}(\Sigma)$ are 
unchanged if $g$ is replaced by some other metric in $[g]_h$, and that
$\ol{[g]_h}$ is (non-canonically) bijective to the set of nonnegative functions in $L^{p}(\Sigma)$ via
(\ref{eqn_assoc_harmonic}).
Sometimes for emphasis we will refer to $[g]_h$ as the \emph{smooth} harmonic conformal class of $g$.

We also point out that every metric in $\ol{[g]_h}$ is asymptotically flat as in Definition \ref{def_AF}, 
modulo the implicit assumption of smoothness up to the boundary.  To
define the area of any surface $S$ enclosing $\Sigma$ 
with respect to $\ol g=u^{k} g \in \ol{[g]_h}$, we decompose $S$ into the pieces $S \cap \Sigma$ and $S \sm \Sigma$:
\begin{equation}
|S|_{\ol g}= \int_{S \cap \Sigma} f^{p} d\Hscr^{n-1} + \int_{S \sm \Sigma} u^{p} d\Hscr^{n-1},
\label{eqn_def_area}
\end{equation}
where $f$ is the function in $L^{p}(\Sigma)$ associated to $u$, and $\Hscr^{n-1}$ is Hausdorff $(n-1)$-measure on $M$ with respect to $g$.  
Equation (\ref{eqn_def_area}) is well-defined (dependent only on $\ol g$ and not on $g$) and
reproduces the usual notion of area in the case that $\ol g$ is smooth.  


\subsection{The area profile function}
Using the generalized harmonic conformal class, we now define the area profile function.
\begin{definition}
\label{def_alpha}
Given numbers $A>0$ and $C\geq 1$, define
$$\alpha_C(A) = \sup_{\ol g=u^{k} g \in \ol{[g]_h}} \left\{\min(\Sigma, \ol g) : \; |\Sigma|_{\ol g} \leq A \text{ and } u \leq C\right\},$$
and let
$$\alpha(A) = \lim_{C \to \infty} \alpha_C(A).$$
We call $\alpha(A)$ the \textbf{area profile function}.
\end{definition}
In this definition, we use (\ref{eqn_def_area}) to define area with respect to metrics $\ol g$ and define
the minimal enclosing area $\min(\Sigma, \ol g)$ exactly as in (\ref{eqn_min_enclosing_area}).  The basic idea here is
to first maximize $\min(\Sigma,\ol g)$ with an area upper bound $A$ and requiring the conformal factors to be bounded by $C$,
then let $C$ go to infinity.  Observe that the limit
defining $\alpha(A)$ exists, since for fixed $A$, the quantity $\alpha_C(A)$ viewed as a function of $C$ 
is bounded above by $A$ (by the definition of minimal
enclosing area) and non-decreasing (by definition of supremum).  We require $C \geq 1$ since the function $u$ must approach
the value of 1 at infinity.  

We remark that it is possible to define an area profile function
by replacing $[g]_h$ with $\ol{[g]_h}$ in (\ref{eqn_alpha_0}), obtaining a harmonic conformal invariant, but we opt not to do so -- working
directly with conformal metrics with merely $L^{p}$ (rather than $L^\infty$) boundary data is difficult, and our definition of $\alpha(A)$ allows us to bypass this technical detail.

Evidently $\alpha_C(A)$ is not an invariant of $[g]_h$,
since the pointwise upper bound $u \leq C$ is not a geometric statement.  However, invariance is restored by taking the limit $C \to \infty$.
\begin{lemma}
The function $\alpha:\R^+ \to \R^+$ depends only on the harmonic conformal class $[g]_h$.
\end{lemma}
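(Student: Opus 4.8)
The plan is to observe that the entire construction of $\alpha$ is geometric except for the auxiliary pointwise bound $u \leq C$, and that this single non-geometric ingredient washes out in the limit $C \to \infty$. Fix $A>0$ and let $g' = v^{k} g$ be any other metric in $[g]_h$, so that $v>0$ is $g$-harmonic, smooth up to $\Sigma$, and tends to $1$ at infinity. Write $\alpha^g_C(A)$ and $\alpha^{g'}_C(A)$ for the quantities of Definition \ref{def_alpha} computed relative to the background metrics $g$ and $g'$, respectively. Since $\ol{[g]_h} = \ol{[g']_h}$ and $L^{p}(\Sigma)$ is the same for both (as already remarked), and since both the objective $\min(\Sigma,\ol g)$ and the constraint $|\Sigma|_{\ol g} \leq A$ depend only on $\ol g$ through (\ref{eqn_def_area}) and (\ref{eqn_min_enclosing_area}), the two suprema differ \emph{only} through their respective pointwise bounds on the conformal factor.

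First I would record that $v$ is pinned between positive constants: $v$ is continuous and positive on the compact boundary $\Sigma$, smooth and positive on $M$, and approaches $1$ at infinity, so there exist constants $0 < v_{\min} \leq v \leq v_{\max} < \infty$ on all of $M$. (The maximum principle is available, but only finiteness and positivity of the bounds are needed.) Next, for any $\ol g \in \ol{[g]_h}$ I would write $\ol g = u^{k} g = w^{k} g'$, where $w$ is the $g'$-harmonic factor associated to $\ol g$; comparing with $g' = v^{k} g$ gives $u = wv$ pointwise on $M \sm \Sigma$. Hence $u \leq C$ forces $w = u/v \leq C/v_{\min}$, while $w \leq C$ forces $u = wv \leq C\, v_{\max}$.

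These two inequalities sandwich the feasible sets of the two optimization problems. Because the objective and the area constraint are identical, enlarging the feasible set can only increase the supremum, so for all $C$ large enough that the shifted indices exceed $1$,
$$\alpha^g_C(A) \leq \alpha^{g'}_{C/v_{\min}}(A), \qquad \alpha^{g'}_C(A) \leq \alpha^g_{C v_{\max}}(A).$$
Now I would let $C \to \infty$. Since $C \mapsto \alpha^{g'}_C(A)$ is nondecreasing and bounded above by $A$, its limit along the reparametrized family $C/v_{\min} \to \infty$ is again $\alpha^{g'}(A)$, and likewise $\alpha^g_{C v_{\max}}(A) \to \alpha^g(A)$. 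Passing to the limit in the two displayed inequalities yields $\alpha^g(A) \leq \alpha^{g'}(A)$ and $\alpha^{g'}(A) \leq \alpha^g(A)$, hence $\alpha^g(A) = \alpha^{g'}(A)$. As $A$ and $g'$ were arbitrary, $\alpha$ depends only on $[g]_h$.

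The main obstacle — and the very reason $\alpha_C$ itself is \emph{not} invariant — is precisely the non-geometric constraint $u \leq C$: under a change of background metric this bound transforms by the factor $v$, which is not constant. The crux is that $v$ is nonetheless trapped between two positive constants, so the reparametrizations $C \mapsto C/v_{\min}$ and $C \mapsto C v_{\max}$ become harmless once $C \to \infty$. Everything else is bookkeeping, resting on the already-established facts that $\ol{[g]_h}$ and the area functional (\ref{eqn_def_area}) are intrinsic to the harmonic conformal class.
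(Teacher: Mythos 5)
Your argument is correct and is essentially the paper's own proof: both identify the pointwise bound $u \leq C$ as the only non-geometric ingredient, pin the transition factor between the two background metrics between positive constants using compactness of $\Sigma$ and the decay to $1$ at infinity, deduce the comparison $\alpha^{g}_C(A) \leq \alpha^{g'}_{C/v_{\min}}(A)$ (and its counterpart), and let $C \to \infty$. The only cosmetic difference is that the paper derives one inequality and then swaps the roles of the two metrics, whereas you write both inequalities at once.
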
 
\begin{proof}
Let $g_1, g_2$ be two metrics in the same harmonic conformal class, determining functions $\alpha^{(1)}_C(A)$ and $\alpha^{(2)}_C(A)$, 
respectively, as in Definition \ref{def_alpha} .  Say $g_2 = \psi^{k} g_1$, where $\psi$ is smooth, positive, and harmonic with respect to $g_1$, approaching one at infinity.  There exist 
positive constants $a,b$ such that
$a \leq \psi \leq b$ on $M$, since $\partial M$ is compact and $\psi \to 1$ at infinity.  Let $\ol g = u^{k} g_1$ be a valid test metric in $\ol{[g_1]_h}$ for $\alpha_C^{(1)}(A)$, meaning
$$|\Sigma|_{\ol g} \leq A \text{ and } u \leq C.$$
Then the same metric $\ol g$ can be written as $\left(\frac{u}{\psi}\right)^{k} g_2$, an element of $\ol{[g_2]_h}$ (by formula (\ref{eqn_conf_laplacian})) that satisfies
$$|\Sigma|_{\ol g} \leq A \text{ and } \frac{u}{\psi} \leq \frac{C}{a}.$$
Therefore $\ol g$ is a valid test metric for $\alpha^{(2)}_{C/a}(A)$.  
Since $g_1$ and $g_2$ determine the same generalized harmonic conformal class, we see that the set of test metrics for $\alpha^{(1)}_C(A)$ is a subset
of the set of test metrics for $\alpha^{(2)}_{C/a}(A)$.  In particular,
$$\alpha^{(1)}_C(A) \leq \alpha^{(2)}_{C/a}(A).$$
Taking the limit $C \to \infty$, then applying the same argument with the roles of $g_1$ and $g_2$ (and $a$ and $b$) swapped, we see
$$\lim_{C \to \infty} \alpha^{(1)}_C(A) = \lim_{C \to \infty} \alpha^{(2)}_C(A),$$
proving that $\alpha(A)$ depends only on the harmonic conformal class.
\end{proof}

Our first goal is to show that for given values of $A$ and $C$, the supremum in the definition of $\alpha_C(A)$ is attained.
This fact is the advantage of considering the enlarged space $\ol{[g]_h}$ over $[g]_h$ -- our sole motivation for introducing the generalized harmonic
conformal class was to prove the existence of a maximizer.  
\begin{thm}
\label{thm_maximizer}
Given $A>0,C\geq 1$, there exists $\ol g =u^{k} g \in \ol{[g]_h}$ satisfying
$$|\Sigma|_{\ol g} \leq A \qquad \text{ \emph{and} } \qquad u \leq C$$
that attains the supremum in Definition \ref{def_alpha}:
$$\min(\Sigma,\ol g) = \alpha_C(A).$$
Moreover, $|\Sigma|_{\ol g}=A$, provided $C$ is sufficiently large 
($C^{p} \geq \frac{A}{|\Sigma|_g}$).
\end{thm}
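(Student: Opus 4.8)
The plan is to apply the direct method of the calculus of variations to the parametrization of $\ol{[g]_h}$ by boundary data. By (\ref{eqn_assoc_harmonic}) an admissible metric $\ol g = u^{k}g$ corresponds to a nonnegative $f \in L^{p}(\Sigma)$, and by (\ref{eqn_def_area}) applied to $S=\Sigma$ the area constraint reads $\int_\Sigma f^{p}\,dA \leq A$. Since $C \geq 1$ and $u \to 1$ at infinity, the maximum principle together with the nontangential boundary behavior of $u$ (from the Remarks following Definition \ref{def_ghcc}) shows that the interior bound $u \leq C$ is equivalent to $0 \leq f \leq C$ a.e. Thus $\alpha_C(A)$ is the supremum of $\min(\Sigma, u^{k}g)$ over the convex set
$$\mathcal{F} = \left\{ f \in L^\infty(\Sigma) : 0 \leq f \leq C \text{ a.e.}, \ \int_\Sigma f^{p}\,dA \leq A\right\}.$$
I would take a maximizing sequence $f_i \in \mathcal{F}$ and use the uniform bound $f_i \leq C$ to extract a weak-$*$ $L^\infty(\Sigma)$ limit $f_\infty$. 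The constraints $0 \leq f_\infty \leq C$ and (by weak lower semicontinuity of the convex functional $f \mapsto \int_\Sigma f^{p}$) $\int_\Sigma f_\infty^{p}\,dA \leq A$ persist, so $f_\infty \in \mathcal{F}$ and its metric $\ol g_\infty = u_\infty^{k}g$ is admissible. Since $K(x,\cdot) \in L^1(\Sigma)$ for each interior $x$, the associated harmonic functions satisfy $u_i \to u_\infty$ pointwise; as $0 \leq \varphi \leq u_i \leq C$, interior elliptic estimates upgrade this to $u_i \to u_\infty$ in $C^\infty_{loc}(M \sm \Sigma)$.

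It then suffices to establish the upper semicontinuity $\min(\Sigma, \ol g_\infty) \geq \limsup_i \min(\Sigma, \ol g_i)$; combined with $\min(\Sigma, \ol g_\infty) \leq \alpha_C(A)$ (admissibility) and $\min(\Sigma, \ol g_i) \to \alpha_C(A)$, this forces $\min(\Sigma, \ol g_\infty) = \alpha_C(A)$. Let $S_\infty$ be a minimal area enclosure for $\ol g_\infty$, which exists by the cited compactness results. Using $S_\infty$ as a fixed competitor for each $\ol g_i$ gives $\min(\Sigma, \ol g_i) \leq |S_\infty|_{\ol g_i}$, where by (\ref{eqn_def_area})
$$|S_\infty|_{\ol g_i} = \int_{S_\infty \cap \Sigma} f_i^{p}\, d\Hscr^{n-1} + \int_{S_\infty \sm \Sigma} u_i^{p}\, d\Hscr^{n-1}.$$
The interior term converges to $\int_{S_\infty \sm \Sigma} u_\infty^{p}$ by dominated convergence, using the $C^\infty_{loc}$ convergence and the bound $u_i \leq C$ on the finite-measure set $S_\infty \sm \Sigma$.

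The main obstacle is the boundary term $\int_{S_\infty \cap \Sigma} f_i^{p}\, d\Hscr^{n-1}$: because $t \mapsto t^{p}$ is convex, weak-$*$ convergence controls it only from below—the \emph{wrong} direction—and indeed the measures $f_i^{p}\,dA$ may concentrate singular mass on a $dA$-null set $Z \subset \Sigma$. Defeating this concentration is precisely the role of the finiteness of $C$. I would modify the competitor near $Z$: cover $Z$ by an open $U \subset \Sigma$ of arbitrarily small $g$-measure and replace $S_\infty \cap U$ by a \emph{dome} pushed slightly into the interior, still of the form $\Sigma + \partial\Omega$. Since $u_i \leq C$ in the interior, the $\ol g_i$-area of such a dome is at most $C^{p}$ times its (small) $g$-area, hence negligible; over $\Sigma \sm U$ the measures $f_i^{p}\,dA$ carry no concentration, so their limit is the absolutely continuous part. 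Letting the $g$-size of $U$ tend to zero yields $\limsup_i \min(\Sigma, \ol g_i) \leq |S_\infty|_{\ol g_\infty} = \min(\Sigma, \ol g_\infty)$. I expect the delicate technical heart to be verifying that the domes can be built with controlled area over the null set carrying the singular mass; this is exactly where working with finite $C$, rather than the full class $\ol{[g]_h}$, is indispensable.

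For the area identity, suppose $C^{p} \geq A/|\Sigma|_g$ and that a maximizer has $\int_\Sigma f^{p}\,dA < A$. Since $C^{p}|\Sigma|_g \geq A$, there is room to enlarge $f$ to some $f'$ with $f \leq f' \leq C$ and $\int_\Sigma (f')^{p}\,dA = A$, by increasing $f$ on the set where $f < C$. As $f' \geq f$ pointwise and the Poisson kernel is positive, the associated harmonic function satisfies $u' \geq u$, so by (\ref{eqn_def_area}) every surface enclosing $\Sigma$ has $\ol g'$-area at least its $\ol g$-area; hence $\min(\Sigma, \ol g') \geq \min(\Sigma, \ol g) = \alpha_C(A)$. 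Since $f'$ is admissible, equality holds, so $\ol g'$ is itself a maximizer with boundary area exactly $A$. (When $C^{p} < A/|\Sigma|_g$ no admissible metric attains boundary area $A$, which explains the hypothesis.)
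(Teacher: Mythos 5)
Your overall architecture --- parametrize by boundary data, extract a weak limit from a maximizing sequence, get admissibility from lower semicontinuity of the convex constraint, and prove upper semicontinuity of $\min(\Sigma,\cdot)$ by testing against a fixed competitor whose boundary-touching part is pushed into the interior --- is the same as the paper's, and your treatment of the interior term and of the final claim $|\Sigma|_{\ol g}=A$ matches the paper's. But the key step has a genuine gap, located exactly where you place the ``delicate technical heart.'' You diagnose the danger as concentration of the measures $f_i^{p}\,dA$ on a $dA$-null set $Z$; in fact no such concentration can occur, since $f_i \leq C$ forces $f_i^{p}\,dA \leq C^{p}\,dA$ uniformly, so every weak-$*$ limit of these measures is absolutely continuous. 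The real failure mode is oscillation: weak convergence $f_i \rightharpoonup f_\infty$ does not commute with the strictly convex power $t\mapsto t^{p}$, so on a set $E\subset\Sigma$ of \emph{positive} measure one can have $\lim_i \int_E f_i^{p}\,dA$ strictly larger than $\int_E f_\infty^{p}\,dA$ (take $f_i$ alternating between $0$ and $C$ on finer and finer partitions of $E$). Your remedy --- doming the competitor only over a small neighborhood $U$ of the null set --- therefore does not close the argument: the excess of $\limsup_i \int_{(S_\infty\cap\Sigma)\sm U} f_i^{p}\,dA$ over $\int_{(S_\infty\cap\Sigma)\sm U} f_\infty^{p}\,dA$ can stay bounded away from zero no matter how small $U$ is, and the assertion that off $U$ ``their limit is the absolutely continuous part'' (meaning $f_\infty^{p}\,dA$) is false.

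The repair is to push the \emph{entire} boundary-touching portion of the competitor off $\Sigma$ and to reverse the order of quantifiers: first fix a surface $S$ disjoint from $\Sigma$ and let $i\to\infty$, where uniform convergence $u_i\to u_\infty$ on compact sets away from $\Sigma$ gives $|S|_{\ol g_i}\to |S|_{\ol g_\infty}$ with no boundary term at all, so that $\alpha_C(A)\leq |S|_{\ol g_\infty}$ for every such $S$; then show that the infimum of $|S|_{\ol g_\infty}$ over surfaces disjoint from $\Sigma$ equals $\min(\Sigma,\ol g_\infty)$. The second step is the paper's Lemma \ref{lemma_push_out}, proved by flowing a given surface inward along a vector field transverse to $\Sigma$ and combining the nontangential boundary limits of $u_\infty$ with dominated convergence --- this is where $u_\infty\leq C$ enters, in the role you correctly anticipated for finite $C$, but applied once to the single limit metric rather than uniformly along the sequence. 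This reordering also removes your reliance on the existence of an exact minimal area enclosure $S_\infty$ for the generalized metric $\ol g_\infty$, which the paper does not establish (its existence and regularity discussion is for smooth metrics); the paper's argument only ever uses the infimum defining $\min(\Sigma,\ol g_\infty)$.
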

From now on, such $\ol g$ will be called a \emph{maximizer for} $\alpha_C(A)$ without further comment.  Unlike the case of $\mu(A)$,
we make no claim that a maximizer for $\alpha_C(A)$ is unique.
\begin{proof}
Fix $A>0,C\geq 1$.  Let $\{u_i^{k} g\}_{i=1}^\infty$ be a maximizing sequence for $\alpha_C(A)$ in $\ol{[g]_h}$.  That is, assume
\begin{equation}
|\Sigma|_{u_i^{k} g} \leq A, \quad u_i \leq C, \quad \text{ and }\quad
\min(\Sigma, u_i^{k} g) \nearrow \alpha_C(A).
\label{eq_increasing_seq}
\end{equation}
Let $f_i$ be the function on $\Sigma$
associated to $u_i$, so $f_i \leq C$ almost-everywhere on $\Sigma$.  Since
$$\int_\Sigma f_i^p dA = |\Sigma|_{u_i^{k} g} \leq A,$$
the sequence $\{f_i\}$ is bounded in $L^{p}(\Sigma)$, and thus has a weakly convergent
subsequence (of the same name, say) with limit $f \in L^{p}(\Sigma)$ (by the Banach--Alaoglu theorem \cite{reed_simon}).  Recall this means
that for all continuous functions $\phi$ on $\Sigma$,
\begin{equation}
\lim_{i \to \infty} \int_\Sigma f_i \phi dA = \int_\Sigma f \phi dA.
\label{eqn_weak_convergence}
\end{equation}
Redefining on a set of measure zero,
we may assume that $f \leq C$.
Let $u$ be the harmonic function associated to $f$, and let $\ol g = u^{k} g$, an element of $\ol{[g]_h}$.  By the maximum principle, $u \leq C$.
Since the $L^{p}$ norm is lower semi-continuous with respect to weak convergence, we have
that $\int_\Sigma f^{p}dA \leq A$, so that $|\Sigma|_{\ol g} \leq A$. In other words, $\ol g$ is a valid test metric for $\alpha_C(A)$. 
We claim that $\ol g$ is a maximizer for $\alpha_C(A)$, i.e., $\ol g$ has minimal enclosing area equal to $\alpha_C(A)$.

Let $S$ be a surface enclosing $\Sigma$ that is disjoint from $\Sigma$. From the definition of the minimal enclosing area, we have that for all $i$,
\begin{equation}
\label{eqn_min_test}
\min (\Sigma, u_i^{k} g) \leq |S|_{u_i^{k} g}.
\end{equation}
The left hand side converges to $\alpha_C(A)$ by assumption.  We also see from (\ref{eqn_assoc_harmonic}), the fact that the Poisson kernel
$y \mapsto K(x,y)$ is continuous for $x \not \in \Sigma$, and the definition of weak convergence (\ref{eqn_weak_convergence}) 
that $u_i \to u$ pointwise in the interior of $M$ as $i \to \infty$.  By harmonicity, this convergence is uniform on compact sets disjoint 
from $\Sigma$.  The surface $S$ is such a set, so
$$\lim_{i \to \infty} |S|_{u_i^{k} g} = \lim_{i \to \infty} \int_S u_i^{p} d\Hscr^{n-1} = \int_S u^{p} d\Hscr^{n-1} = |S|_{\ol g}.$$
Now, taking the limit $i \to \infty$ of (\ref{eqn_min_test}), we obtain for all $S$ disjoint from $\Sigma$:
$$\alpha_C(A) \leq |S|_{\ol g}.$$
Lemma \ref{lemma_push_out} below shows that to compute the minimal enclosing area, it is sufficient to consider only surfaces $S$ disjoint from the boundary.  It then follows that
$$\alpha_C(A) \leq \min(\Sigma,\ol g).$$
From the definition of $\alpha_C(A)$, the reverse inequality holds as well, so $\ol g$ is the desired maximizer for $\alpha_C(A)$.

Finally, we argue that we may assume $\ol g$ has boundary area equal to $A$, for $C$ large enough.  If not, suppose
that 
\begin{equation}
|\Sigma|_{\ol g} = \int_\Sigma f^{p} dA < A \qquad \text{and} \qquad C^{p} > \frac{A}{|\Sigma|_g}.
\label{eqn_area_C}
\end{equation}
These inequalities preclude
the possibility that $f \equiv C$ almost-everywhere.  For $t \in [0,1]$, consider the family of functions
$$f_t = f + t(C-f)$$
that interpolate between $f$ and $C$.  Note that $f \leq f_t \leq C$ for each $t$.  By the intermediate value theorem 
and (\ref{eqn_area_C}), there exists $s \in (0,1)$ such that
$$\int_\Sigma f_{s}^{p} dA = A.$$
Let $u_{s}$ be the harmonic function associated to $f_{s}$.  By construction, $u_{s}^{k} g$ is a valid test metric for $\alpha_C(A)$.
By the maximum principle, since $f_{s} \geq f$, we have that $u_{s} \geq u$ pointwise.  Then we have an inequality for the
minimal enclosing areas:
$$\min(\Sigma, u_{s}^{k} g) \geq \min(\Sigma, u^{k} g).$$
The left-hand side is at most $\alpha_C(A)$, and the right-hand side was already shown to equal $\alpha_C(A)$.  It follows that $u_{s}^{k}$
is a maximizer for $\alpha_C(A)$, and moreover its boundary area equals $A$.
\end{proof}

Now we prove a lemma used in the above construction of a maximizer for $\alpha_C(A)$.
\begin{lemma}
\label{lemma_push_out}
For the purposes of computing the minimal enclosing area (\ref{eqn_min_enclosing_area})
with respect to $\ol g=u^{k} g \in \ol{[g]_h}$, with $u \leq C$,
it is sufficient to consider only surfaces
$S$ that are disjoint from the boundary.
\end{lemma}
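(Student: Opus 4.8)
We need to show that when computing the minimal enclosing area of $\Sigma$ with respect to $\bar g = u^k g$ (where $u \leq C$), we can restrict attention to surfaces $S$ that are disjoint from $\Sigma$. The key subtlety is the area formula (eqn_def_area): a surface $S$ enclosing $\Sigma$ contributes $\int_{S\cap\Sigma} f^p\, d\mathscr{H}^{n-1}$ along its intersection with the boundary (using the $L^p$ boundary data $f$) plus $\int_{S\setminus\Sigma} u^p\, d\mathscr{H}^{n-1}$ in the interior.

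**What I would prove:** Given any enclosing surface $S$, I want to produce another enclosing surface $S'$ disjoint from $\Sigma$ with $|S'|_{\bar g} \leq |S|_{\bar g} + \epsilon$ (or $\leq |S|_{\bar g}$ directly). The natural idea is to "push $S$ off the boundary into the interior."

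Let me think about this more carefully.

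The plan is to take an arbitrary surface $S$ enclosing $\Sigma$ and perturb it to a nearby surface $S'$ disjoint from $\Sigma$ whose $\bar g$-area is no larger (up to $\epsilon$). The decomposition (eqn_def_area) splits $|S|_{\bar g}$ into a boundary piece $\int_{S\cap\Sigma} f^p\,d\mathscr H^{n-1}$ and an interior piece $\int_{S\setminus\Sigma} u^p\,d\mathscr H^{n-1}$. I would handle the portion of $S$ lying along $\Sigma$ by pushing it a small distance $\delta$ into the interior of $M$, following the inward normal flow of $\Sigma$.

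First I would fix a collar neighborhood of $\Sigma$, identifying it with $\Sigma \times [0,\delta_0)$ via the $g$-normal exponential map, and for each small $\delta>0$ let $\Sigma_\delta = \Sigma \times \{\delta\}$ be the pushed-off copy of the boundary. The modified surface would be $S' = (S\setminus\Sigma) \cup (\text{the part of } \Sigma_\delta \text{ covering } \pi(S\cap\Sigma))$, suitably connected so that $S'$ still encloses $\Sigma$ as integral currents (one replaces the region $\Omega$ bounded by $S$ with a region differing by the thin collar slab). The $g$-area of the pushed-off piece converges, as $\delta\to 0^+$, to the $g$-area $\mathscr H^{n-1}(S\cap\Sigma)$, since $\Sigma_\delta \to \Sigma$ smoothly. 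Crucially, on $\Sigma_\delta$ the conformal factor is the \emph{smooth, interior} value $u|_{\Sigma_\delta}$, and by Remark \ref{remarks_harmonic} the trace of $u$ from the interior equals $f$ almost everywhere on $\Sigma$; hence $\int_{\Sigma_\delta \cap \pi\inv(S\cap\Sigma)} u^p\, d\mathscr H^{n-1} \to \int_{S\cap\Sigma} f^p\, d\mathscr H^{n-1}$ as $\delta\to 0^+$. Therefore $|S'|_{\bar g} \to |S|_{\bar g}$, which shows the infimum over disjoint surfaces does not exceed the infimum over all enclosing surfaces, and the reverse is trivial.

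The hard part will be justifying the convergence $\int_{\Sigma_\delta} u^p \to \int_{\Sigma} f^p$ rigorously, since $u$ need only be in $L^p$ near $\Sigma$ and its boundary trace is understood only in the non-tangential sense of Remark \ref{remarks_harmonic}, not as a Sobolev trace. Here the bound $u\leq C$ is essential: it gives a uniform $L^\infty$ (hence dominating) control allowing one to invoke the non-tangential convergence $u\circ\gamma(t)\to f$ together with a dominated-convergence argument along the collar. I would also need to check that $S'$ genuinely encloses $\Sigma$ in the integral-current sense—that replacing the collar slab changes $\Omega$ by a bounded region of finite perimeter—and that any lower-dimensional overlap between $S\setminus\Sigma$ and $\Sigma$ contributes zero $\mathscr H^{n-1}$-measure so the decomposition behaves well in the limit. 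Modulo these measure-theoretic verifications, the geometric content is simply that the area formula assigns to the boundary piece exactly the limiting interior value, so sliding $S$ off $\Sigma$ costs nothing.
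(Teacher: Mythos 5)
Your analytic core is exactly the paper's: decompose $S$ into $S\cap\Sigma$ and $S\sm\Sigma$, use the non-tangential a.e.\ convergence $u\to f$ from the remarks before Definition~\ref{def_ghcc}, and invoke dominated convergence with the uniform bound $u\leq C$ (plus a Jacobian factor tending to $1$) to show the pushed-off areas converge to $|S|_{\ol g}$. Where you differ is the geometric construction, and that difference hides a real obstacle. You keep $S\sm\Sigma$ fixed and replace only $S\cap\Sigma$ by a piece of the parallel hypersurface $\Sigma_\delta$, then hope to ``suitably connect'' the two. But $S\cap\Sigma$ is merely an $\Hscr^{n-1}$-measurable subset of $\Sigma$, so the slab $\pi\inv(\pi(S\cap\Sigma))\cap\{0\leq s\leq\delta\}$ you adjoin to $\Omega$ contributes lateral boundary faces over the boundary of $\pi(S\cap\Sigma)$ inside $\Sigma$, and there is no a priori control on the $\Hscr^{n-2}$-measure of that set; these lateral faces need not have area tending to zero with $\delta$, and without them your $S'$ is not of the form $\Sigma+\partial\Omega'$ at all. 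So the step ``$S'$ genuinely encloses $\Sigma$'' that you defer is not a routine verification --- it is the place your construction can fail.

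The paper dissolves this by moving the \emph{entire} surface rather than cutting and pasting: take a compactly supported vector field $X$ with $X|_\Sigma=\nu$, let $\Phi_t$ be its flow, and use $\Phi_t(S)$ as the competitor. Since $\Phi_t$ is a diffeomorphism onto its image carrying $M$ into its interior for $t>0$, the image $\Phi_t(S)=\Phi_t(\Sigma)+\partial\Phi_t(\Omega)$ is automatically an enclosing surface disjoint from $\Sigma$, with no gluing and no lateral faces; one then reparametrizes the area integrals by $\Phi_t$ and runs exactly the dominated convergence argument you describe (with $J_t\to 1$ uniformly playing the role of your collar-area convergence). I would either adopt that flow construction outright or supply a genuine argument that your lateral contributions vanish; as written, the proposal is incomplete at precisely the point you flag as ``measure-theoretic verification.''
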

\begin{proof}

Let $S$ be any surface enclosing the boundary.  Let $X$ be a smooth, compactly supported vector field on $M$ such that $X|_\Sigma$
equals $\nu$, the unit normal vector field to $\Sigma$ pointing into $M$.  
For $t\geq 0$, let $\Phi_t:M\to M$ be the flow generated by $X$; note that $\Phi_t$ is a diffeomorphism onto its image, 
is the identity map outside a compact set, and maps $\Sigma=\partial M$ into the interior of $M$ for $t>0$.  In particular, for $t>0$, $\Phi_t(S)$
is a surface enclosing $\Sigma$ that is disjoint from the boundary.  To prove the lemma, we need only show that the area of $\Phi_t(S)$ with respect
to $\ol g$ varies continuously in $t$.

Observe that $|S|_{\ol g} < \infty$, since $u \leq C$ and $|S|_g$ is finite by our definition of surface. 
We decompose $S$ into
the disjoint, $\Hscr^{n-1}$-measurable sets $S \cap \Sigma$ and $S \sm \Sigma$.  First, consider $S \cap \Sigma$.  Reparametrizing the integral,
\begin{equation}
|\Phi_t(S\cap\Sigma)|_{\ol g}=\int_{\Phi_t(S \cap \Sigma)} u^{p} d\Hscr^{n-1} = \int_{S \cap \Sigma} (u \circ \Phi_t)^{p} d(\Phi_t^* \Hscr^{n-1}), 
\label{eqn_reparam_integral}
\end{equation}
where we have formed the pullback measure $\Phi_t^*\Hscr^{n-1}$ on $\Sigma$ of the measure $\Hscr^{n-1}$ on $\Phi_t(\Sigma)$:
$$\Phi_t^* \Hscr^{n-1}(E) := \Hscr^{n-1}(\Phi_t(E)),$$
for $E \subset \Sigma$ measurable.  Since $\Phi_t$ is a diffeomorphism, $\Phi_t^*\Hscr^{n-1}$ is absolutely continuous with respect to $\Hscr^{n-1}$, and we may write
$$d (\Phi_t^*\Hscr^{n-1}) = J_t d\Hscr^{n-1}$$
for a measurable function $J_t$ on $\Sigma$ that converges uniformly to 1 as $t \to 0^+$ (since $\Phi_t \to$ identity smoothly as $t \to 0^+$).
Next, $u \circ \Phi_t: \Sigma \to \R$ converges pointwise almost-everywhere to $f$ (see the remarks
preceding Definition \ref{def_ghcc}).  
Since $u \leq C$, the dominated convergence theorem allows us to evaluate the limit:
$$\lim_{t \to 0^+} \int_{S \cap \Sigma} (u \circ \Phi_t)^{p} d(\Phi_t^* \Hscr^{n-1}) 
  = \lim_{t \to 0^+} \int_{S \cap \Sigma} (u \circ \Phi_t)^{p} J_t d\Hscr^{n-1} = \int_{S \cap \Sigma} f^{p} d\Hscr^{n-1}.$$
The left-hand side is $\lim_{t \to 0^+} |\Phi_t(S\cap\Sigma)|_{\ol g}$ by (\ref{eqn_reparam_integral}); the right-hand side is $|S \cap \Sigma|_{\ol g}$.

The proof for $S \sm \Sigma$ is essentially the same: on $S \sm \Sigma$, $u \circ \Phi_t$ converges pointwise to $u$ and is dominated
by the integrable function $C$.
\end{proof}

We have established that the maximum for $\alpha_C(A)$ is attained, but we reiterate the point that the maximizer does not seem to satisfy a variational
principle that would allow us to automatically deduce regularity of this maximizer (see points (i)--(iii)
near the beginning of section \ref{sec_area_profile}).  In
the next section, we study maximizers in a regular case.  For now, we close this section by showing some nice properties satisfied by the function $\alpha(A)$.
\begin{prop}$\;$
\label{prop_alpha_continuous}
\begin{enumerate}[(i)]
\item $\alpha:\R^+ \to \R^+$ is nondecreasing, Lipschitz continuous, satisfying \mbox{$\alpha(A) \leq A$} for $A>0$.
\item There exists $A>0$ such that $\alpha(A) < A$.
\end{enumerate}
\end{prop}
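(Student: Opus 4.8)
The plan is to establish each statement in turn, drawing on the definition $\alpha(A) = \lim_{C\to\infty} \alpha_C(A)$ together with the structural facts already proved about maximizers.

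For part (i), I would argue as follows. Monotonicity and the bound $\alpha(A) \leq A$ are inherited from $\alpha_{\circ}$: for each fixed $C$, the function $A \mapsto \alpha_C(A)$ is nondecreasing (enlarging the area constraint enlarges the admissible set of test metrics) and satisfies $\alpha_C(A) \leq A$ (the minimal enclosing area cannot exceed the boundary area, by the observation following \eqref{eqn_alpha_0}), and both properties survive the limit $C \to \infty$. The substantive claim is Lipschitz continuity. Here the natural approach is a scaling comparison: given $A' > A > 0$, I would take a near-maximizer $\ol g = u^k g$ for $\alpha_C(A)$ and rescale the conformal factor to increase the boundary area to $A'$. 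Concretely, setting $\lambda = (A'/A)^{1/p} > 1$ and considering the associated function $f' = \lambda f$ produces a metric with boundary area exactly $A'$ whose conformal factor is bounded by $\lambda C$; since scaling $u$ by $\lambda$ scales all enclosing areas by $\lambda^p$, one gets a lower bound $\alpha_{\lambda C}(A') \geq \lambda^p \, \alpha_C(A)$. Letting $C \to \infty$ yields $\alpha(A') \geq (A'/A)\,\alpha(A)$, i.e. $\alpha(A') - \alpha(A) \geq \frac{A'-A}{A}\,\alpha(A)$; combined with the reverse estimate obtained by shrinking a maximizer for $A'$ down to $A$ (using $f/\lambda$, which only lowers the conformal bound and hence stays admissible), I would sandwich the difference quotient. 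The upshot is $0 \leq \alpha(A') - \alpha(A) \leq \alpha(A')\cdot\frac{A'-A}{A'} \leq A'-A$, using $\alpha(A') \leq A'$; this gives Lipschitz constant $1$ on any interval bounded away from $0$, and in fact the bound $\alpha(A')-\alpha(A)\le A'-A$ holds globally.

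For part (ii), the goal is to exhibit a single value $A$ for which the minimal enclosing area is strictly less than $A$, even after optimizing. The idea is that no matter how one redistributes area on $\Sigma$ via a harmonic conformal factor, the \emph{interior} geometry forces a cheaper enclosing surface to exist once $A$ is large. I would fix a small coordinate region near $\Sigma$ and a candidate enclosing surface $S_0$ sitting just inside $\Sigma$, disjoint from it, whose $g$-area is some finite fixed constant. For any test metric $\ol g = u^k g$ with $u \leq C$, the $\ol g$-area of $S_0$ is $\int_{S_0} u^p \, d\Hscr^{n-1}$; the point is that $u$ in the interior is controlled by its boundary data only through the Poisson kernel, and I would estimate $\min(\Sigma,\ol g) \leq |S_0|_{\ol g}$ by a bound that grows strictly more slowly in the relevant parameter than the boundary-area budget $A$. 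The cleanest route is to take $A$ very large: by the scaling relation from part (i), $\alpha(A)/A$ is nonincreasing in $A$ (since $\alpha(A') \geq (A'/A)\alpha(A)$ rearranges to $\alpha(A')/A' \geq \alpha(A)/A$ — wait, this goes the wrong way), so instead I would directly produce one explicit surface whose area stays uniformly bounded while the boundary budget $A \to \infty$ forces concentration of $f$, and invoke that a surface disjoint from $\Sigma$ sees only the interior harmonic extension, which cannot concentrate.

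The main obstacle is precisely this last estimate in (ii): showing that maximizing the minimal enclosing area genuinely cannot reach the trivial bound $A$. The difficulty is that the maximizer's conformal factor may concentrate boundary area near a point of $\Sigma$, and one must verify that such concentration is ``wasted'' from the standpoint of enclosing surfaces — a nearby interior surface can duck around the concentration at bounded cost. I expect the rigorous version requires choosing $A$ large (not small) and using that any $f \in L^p(\Sigma)$ with $\int_\Sigma f^p\,dA \leq A$ yields, via the Poisson kernel, an interior function $u$ whose $L^p$-average over a fixed interior surface $S_0$ is bounded by a constant depending on $\mathrm{dist}(S_0,\Sigma)$ times $\|f\|_{L^p(\Sigma)}^{\text{(subcritical power)}}$, so that $|S_0|_{\ol g}$ grows like $A^{1/p} \ll A$ — giving $\alpha(A) \leq |S_0|_{\ol g} < A$ for $A$ sufficiently large. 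Establishing this subcritical growth via interior elliptic estimates for the Poisson extension is the technical heart of the argument.
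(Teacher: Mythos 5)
There are genuine gaps in both parts, and in each case the missing ingredient is the specific device the paper uses.

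In part (i), your scale-up inequality $\alpha_{\lambda C}(A') \geq \lambda^{p}\,\alpha_C(A)$ is false as stated, and the reason is the normalization at infinity: the harmonic function associated to $\lambda f$ is not $\lambda u$ but $u' = \varphi + \lambda(u-\varphi)$, since $u'$ must still tend to $1$ at infinity. For $\lambda>1$ this gives $u' \leq \lambda u$, so every enclosing surface satisfies $|S|_{(u')^{k}g} \leq \lambda^{p}|S|_{u^{k}g}$, which yields an \emph{upper} bound on the minimal enclosing area, not the lower bound you need. (Indeed, if your scale-up inequality held it would combine with the scale-down inequality to force $\alpha(A)/A$ to be constant, contradicting part (ii) -- you half-notice this when you remark that something ``goes the wrong way.'') The correct move, and the one the paper makes, is to scale \emph{down}: for $\eta<1$ one gets $u_1 = \varphi + \eta(u_2-\varphi) \geq \eta u_2$ because the $\varphi$-term now has the favorable sign, whence $\alpha_C(A_1) \geq (A_1/A_2)\,\alpha_C(A_2)$ and so $\alpha(A)/A$ is non-increasing. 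Your final chain $0 \leq \alpha(A')-\alpha(A) \leq \alpha(A')\frac{A'-A}{A'} \leq A'-A$ only needs this one direction, so part (i) is repairable by discarding the scale-up step and verifying the scale-down step carefully (including that $u_1 \leq u_2 \leq C$, so admissibility is preserved).

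Part (ii) has the more serious gap. For a fixed surface $S_0$ at positive distance from $\Sigma$, the Poisson bound gives $u \leq 1 + C_0\|f\|_{L^p(\Sigma)} \leq 1 + C_0 A^{1/p}$ on $S_0$, but the area involves $u^{p}$, and $(A^{1/p})^{p}=A$: you get $|S_0|_{\ol g} \lesssim C_0^{p}|S_0|_g\, A$, which is \emph{linear} in $A$, not the claimed $A^{1/p}$. So the argument succeeds only if the constant $C_0^{p}|S_0|_g$ is less than $1$ (indeed less than $\epsilon$), and for a surface ``just inside $\Sigma$'' there is no reason for this -- worse, $C_0=\sup K(x,y)$ blows up as $S_0$ approaches $\Sigma$. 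The paper's resolution is to take $S=S_r$ a \emph{large coordinate sphere near infinity}: there $K(x,y)=O(r^{-(n-2)})$ uniformly in $y$, while $|S_r|_g \sim \omega_{n-1}r^{n-1}$, and since $p(n-2)=2(n-1)>n-1$ the product $r^{n-1}\cdot r^{-p(n-2)} = r^{-(n-1)}$ tends to zero. Writing $u=\lambda\psi+\varphi$ with $\|\psi\|_{L^p(\Sigma)}=1$ and $\lambda=A^{1/p}$, this makes $\int_{S_r}\psi^{p}\,dA$ arbitrarily small uniformly in $\psi$, and Minkowski's inequality then gives $|S_r|_{g_\lambda}\leq \epsilon\lambda^{p}=\epsilon|\Sigma|_{g_\lambda}$ for $\lambda$ large. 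Moving the test surface out to infinity, rather than keeping it near $\Sigma$, is the essential idea your proposal is missing.
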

The second statement rules out the possibility that $\alpha$ is the identity function $\R^+ \to \R^+$.
\begin{proof}$\,$\\
\emph{(i)  } 
The bounds $0 < \alpha(A) \leq A$ follow immediately from the definitions of $\alpha_C$ and $\alpha$, as does the fact that $\alpha$ is nondecreasing.  (One can show 
that $\alpha$ is strictly increasing, but we do not carry this out here.)

Let $0<A_1 < A_2$, and set $\eta = \left(\frac{A_1}{A_2}\right)^{1/p}$, a number in $(0,1)$.  Fix a constant $C$ so that $C^{p} > \frac{A_2}{|\Sigma|_g}$.
By Theorem \ref{thm_maximizer}, there exists a maximizer $\ol g_2 =u_2^{k} g \in \ol{[g]_h}$ for $\alpha_{C}(A_2)$ such that $|\Sigma|_{\ol g_2} = A_2$ and $u_2 \leq C$.  
Let $f_2 \in L^{p}(\Sigma)$ be the function on $\Sigma$ associated to $u_2$, and note that
$$\int_\Sigma \left(\eta f_2 \right)^{p} dA = A_1$$
by our choice of $\eta$.
Let $u_1$ be the harmonic function associated to $\eta f_2$, and let $\ol g_1= u_1^{k} g$. 
In particular, $\ol g_1$ measures the boundary area to be $A_1$, and $u_1 \leq u_2 \leq C$
(where the first inequality follows from the maximum principle).  Then $\ol g_1$ is a valid test metric
for $\alpha_{C}(A_1)$, so 
$$\alpha_{C}(A_1) \geq \min(\Sigma, \ol g_1),$$
by definition.  Next, for $x$ in the interior of $M$, by (\ref{eqn_assoc_harmonic})
\begin{align*}
u_1(x) &= \varphi(x) + \int_\Sigma K(x,y) \eta f_2(y) dA(y)\\
	&= \varphi(x) + \eta (u_2(x) - \varphi(x))\\
	&\geq \eta u_2(x),
\end{align*}
since $\eta < 1$ and $\varphi(x)>0$.  In particular, the minimal enclosing area for $\ol g_1 $ is at least $\eta^{p}$ times that for $\ol g_2$:
$$\min(\Sigma, \ol g_1) \geq \eta^{p} \min(\Sigma, \ol g_2).$$
But we chose $\ol g_2$ to have minimal enclosing area equal to $\alpha_C(A_2)$.
Putting our inequalities together, we have
$$\alpha_{C}(A_1) \geq \eta^{p} \alpha_C(A_2) = \frac{A_1}{A_2} \alpha_C(A_2).$$
Taking $\lim_{C \to \infty}$ of both sides and rearranging, we have
$$\frac{\alpha(A_1)}{A_1} \geq \frac{\alpha(A_2)}{A_2}$$
for all $A_1 < A_2$.
It follows that the function $A\mapsto \frac{\alpha(A)}{A}$ is non-increasing.  Combined with the fact that $\alpha(A)$ 
is non-decreasing and at most equal to $A$, one can readily check that $\alpha$ is Lipschitz continuous with Lipschitz constant at most 1.

\vspace{0.1in}
\noindent \emph{(ii)  } 
Given $\epsilon \in (0,1)$, we will produce a number $A > 0$ and a surface $S$ enclosing $\Sigma$ that has the 
property that 
\begin{equation}
\label{eqn_suff_cond}
|S|_{\ol g} < \epsilon |\Sigma|_{\ol g}, \quad  \text{for all metrics } \ol g \in \ol{[g]_h} \text{ with } |\Sigma|_{\ol g} = A.
\end{equation}
Assuming this to be the case, it follows that
\begin{align*}
\min(\Sigma,\ol g) &\leq |S|_{\ol g}\\
	 &< \epsilon|\Sigma|_{\ol g} = \epsilon A.
\end{align*}
In particular, letting $\ol g$ be a maximizer for $\alpha_C(A)$, then letting $C \to \infty$, we deduce
\mbox{$\alpha(A) \leq \epsilon A < A$.}  To complete the proof, we proceed to establish
(\ref{eqn_suff_cond}).

Without loss of generality (by rescaling), assume $|\Sigma|_g = 1$.
Observe that a harmonic function $u$ that is one at infinity with $L^p$ boundary data on $\Sigma$ can
be uniquely written as
$$u = \psi_\lambda:=\lambda \psi + \varphi,$$
where $\lambda > 0$ is a parameter, $\psi$ is a $g$-harmonic function on $M$ tending to zero at infinity with $L^p(\Sigma)$ boundary data
of $L^p$ norm equal to one.  As usual, $\varphi$ is harmonic, zero on $\Sigma$ and one at infinity.
We will also use the letter $\psi$ to denote the boundary data for $\psi$.  The point is that any metric in the generalized harmonic conformal class 
$\ol{[g]_h}$ with boundary area $A$ can be uniquely written in the form $g_\lambda = \psi_\lambda^{k} g$ for some $\psi$ as above and $\lambda = A^{1/p}$.
For now, take $\lambda > 0$ and $\psi$ with $\int_\Sigma \psi^p dA = 1$ to be arbitrary.


The Poisson kernel $K(x,y)$ is harmonic as a function of $x$, approaching zero at infinity. 
Identifying $x$ with an asymptotically flat coordinate chart, $K(x,y)$ is $O(r^{-n+2})$ in $x$ for large $r=|x|$.  (This decay
is independent of $y \in \Sigma$, since $\Sigma$ is compact.)  Then any harmonic function $\psi$ as above satisfies:
\begin{align*}
\psi(x) &= \int_\Sigma K(x,y) \psi(y) dA(y)\\
		&\leq \frac{c}{r^{n-2}} \int_\Sigma \psi(y) dA(y)\\
		&\leq \frac{c}{r^{n-2}} \left(\int_\Sigma \psi^{p} dA\right)^{\frac{1}{p}} |\Sigma|_g^{\frac{n}{2(n-1)}} &&\text{(by H\"{o}lder's inequality)}\\
		&= \frac{c}{r^{n-2}} 
\end{align*}
where $c>0$ is a constant depending only on $(M,g)$ but not on $\psi$.

Let $\epsilon \in (0,1)$ be given.  The $g$-area of a coordinate sphere $S_r$ in $M$ is asymptotic to $\omega_{n-1}r^{n-1}$ and is therefore 
less than $2 \omega_{n-1} r^{n-1}$ for $r$ sufficiently large. 
In particular, the quantity
$$\int_{S_r} \psi^{p} dA \leq 2\omega_{n-1} r^{n-1} \cdot \frac{c^{p}}{r^{2(n-1)}} $$
can be made less than $\epsilon^2$ by choosing $r$ sufficiently large, \emph{independently of} $\psi$.
Fix such a value of $r$, and let $S=S_r$.

By construction, the area of $\Sigma$ with respect to any $g_\lambda$ as above is $\lambda^{p}$.  
Let us compute the area of $S$ in the metric $g_\lambda$:
\begin{align*}
|S|_{g_\lambda} &= \int_{S} \psi_\lambda^{p} dA\\
	&= \int_S(\lambda \psi + \varphi)^{p} dA.
\end{align*}
Factoring out $\lambda^{p}$, applying the Minkowski inequality, and using $0\leq \varphi \leq 1$ we have
\begin{align*}
|S|_{g_\lambda} &\leq \lambda^{p} \left( \left(\int_S \psi^{p} dA\right)^{1/{p}} + \lambda\inv \left(\int_S \varphi^{p} dA
\right)^{1/{p}}\right)^{p}\\
&\leq \lambda^{p} \left( \epsilon^{2/{p}} +  \lambda\inv |S|_g^{1/{p}}\right)^{p}.
\end{align*}
Then for some $\lambda$ sufficiently large,
$$|S|_{g_\lambda} \leq \epsilon \lambda^{p} = \epsilon |\Sigma|_{g_\lambda}$$
for all choices of $\psi$.  Since every metric $\ol g \in \ol{[g]_h}$ can be written as 
$g_\lambda$ for some $\lambda$ and $\psi$, we have shown (\ref{eqn_suff_cond}) with $A=\lambda^p$, completing the proof.
\end{proof}

\begin{cor}
\label{cor_alpha_A}
Suppose that $\alpha(A) < A$ for some value of $A$.  Then $\alpha(B) < B$ for all $B \geq A$.  Moreover,
$$\lim_{A \to \infty} \frac{\alpha(A)}{A} = 0.$$
\end{cor}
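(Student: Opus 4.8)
The plan is to reduce both assertions to a single structural fact extracted in the proof of Proposition \ref{prop_alpha_continuous}(i): the ratio $A \mapsto \alpha(A)/A$ is non-increasing on $\R^+$. Granting this, neither claim requires any new geometric construction, so the work is entirely in combining monotonicity with the estimate already produced in part (ii).

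First I would dispose of the implication $\alpha(A)<A \Rightarrow \alpha(B)<B$ for $B\ge A$. Assuming $\alpha(A)<A$ gives $\alpha(A)/A<1$, and for any $B\ge A$ the non-increasing property of the ratio yields
$$\frac{\alpha(B)}{B} \leq \frac{\alpha(A)}{A} < 1,$$
so $\alpha(B)<B$. This settles the first statement at once.

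For the limit, I would first note that since $A\mapsto \alpha(A)/A$ is non-increasing and bounded below by $0$ (as $\alpha(A)>0$), the limit $L:=\lim_{A\to\infty}\alpha(A)/A$ exists and lies in $[0,1]$. To force $L=0$ I would revisit the argument for Proposition \ref{prop_alpha_continuous}(ii). Although that statement only asserts the existence of a single $A$ with $\alpha(A)<A$, its proof is in fact run for an arbitrary prescribed $\epsilon\in(0,1)$: it produces a coordinate sphere $S=S_r$ with $\int_{S_r}\psi^p\,dA<\epsilon^2$ uniformly in $\psi$, and a threshold value $A_\epsilon=\lambda^p$ (for $\lambda$ large) such that every metric in $\ol{[g]_h}$ with boundary area $A_\epsilon$ satisfies $|S|_{\ol g}<\epsilon|\Sigma|_{\ol g}$, giving $\alpha(A_\epsilon)\le \epsilon A_\epsilon$. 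Feeding this into the ratio monotonicity, I obtain
$$L \leq \frac{\alpha(A_\epsilon)}{A_\epsilon} \leq \epsilon$$
for every $\epsilon\in(0,1)$, whence $L=0$.

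The only point I would flag as requiring care is the recognition that the construction behind Proposition \ref{prop_alpha_continuous}(ii) is genuinely uniform in $\epsilon$: it outputs not one exceptional value strictly below the diagonal, but a whole family $\{A_\epsilon\}$ realizing arbitrarily small ratios $\alpha(A_\epsilon)/A_\epsilon$. Once this uniformity is acknowledged, the corollary follows from the monotonicity of $A\mapsto\alpha(A)/A$ alone, with no additional estimates needed.
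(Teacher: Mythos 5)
Your proposal is correct and follows the paper's own argument essentially verbatim: both claims are deduced from the monotonicity of $A \mapsto \alpha(A)/A$ established in the proof of Proposition \ref{prop_alpha_continuous}(i), together with the observation that the construction in part (ii) produces, for each $\epsilon \in (0,1)$, some $A$ with $\alpha(A)/A < \epsilon$. Your explicit flagging of the uniformity in $\epsilon$ is exactly the point the paper's terse proof implicitly invokes.
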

\begin{proof}
Both statements follow from the proof of Proposition \ref{prop_alpha_continuous}.  For the first, we showed that $\frac{\alpha(A)}{A}$ is non-increasing as a function of $A$.  For the second, we also showed that given $\epsilon > 0$, there exists $A>0$ large so that
$\frac{\alpha(A)}{A} < \epsilon$.
\end{proof}

\section{Properties of maximizers for the area profile function: the smooth case}
\label{sec_properties}
Continuing the previous section, suppose that $\alpha(A)<A$ and that $\ol g = u^{k} g \in [g]_h$ is a maximizer for $\alpha_C(A)$
as in Theorem \ref{thm_maximizer}.  At this point, we know only that the boundary data $f$ for the conformal factor $u$ is a nonnegative
function in $L^{\infty}(\Sigma)$.  In \cite{thesis}, we worked directly with the poor regularity to prove results on the geometry of $\ol g$ under some additional technical
hypotheses, and with a slightly different definition of $\alpha(A)$.  Rather than pursue this approach here, our present purpose is
to prove a result regarding $\ol g$ in the case where the maximizing metric is \emph{a priori} assumed to be smooth.  This suggests a heuristic
for what we expect to occur in general (c.f. Conjecture \ref{conj_rpi}).


The main idea of the following theorem is that if $\ol g$ is regular, then the outermost minimal area enclosure touches the boundary only on a set of small measure and has uniformly small mean curvature with respect to $\ol g$.
\begin{thm}  Let $(M,g)$ be asymptotically flat of dimension $3 \leq n \leq 7$ with compact boundary $\Sigma$.  
Suppose $\alpha(A)<A$ and $C^p > \max\left(\frac{A}{|\Sigma|_g},1\right)$.  Let $\ol g = u^{k} g$ be a maximizer for $\alpha_C(A)$ given by Theorem \ref{thm_maximizer}, and assume that $u$ is smooth and positive on 
$\Sigma$.   Let $\tilde \Sigma$ be the outermost minimal area enclosure of $\Sigma$ with respect to $\ol g$.  Then 
\begin{enumerate}[(i)]
 \item  $\Hscr^{n-1}(\tilde \Sigma \cap \Sigma) \leq A C^{-p}$ (where $\Hscr^{n-1}$ is Hausdorff $(n-1)$-measure on
$(M,g$)), and
 \item the mean curvature $\ol H$ of $\tilde \Sigma \cap \Sigma$ with respect to $\ol g$ is bounded (pointwise almost-everywhere) between $0$ and  
  $\eta_0 C^{-\frac{2}{n-2}}$, for some constant $\eta_0$ depending only on $g$.
\end{enumerate}
\label{thm_mean_curv}
\end{thm}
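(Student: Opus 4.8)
The plan is to reduce both conclusions to a single \emph{saturation} statement: on the contact set $\tilde\Sigma\cap\Sigma$ the conformal factor is pinned at its maximum, i.e. $u=C$ (equivalently the boundary data $f$ equals $C$) almost everywhere there. Granting this, part (i) is immediate. Since $u=C$ on $\tilde\Sigma\cap\Sigma$ and $\tilde\Sigma\cap\Sigma\subseteq\Sigma$, I compute, using the area formula (\ref{eqn_def_area}),
\[
C^p\,\Hscr^{n-1}(\tilde\Sigma\cap\Sigma)=\int_{\tilde\Sigma\cap\Sigma}u^p\,d\Hscr^{n-1}=|\tilde\Sigma\cap\Sigma|_{\ol g}\le|\Sigma|_{\ol g}=A,
\]
so $\Hscr^{n-1}(\tilde\Sigma\cap\Sigma)\le AC^{-p}$. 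Here I use $|\Sigma|_{\ol g}=A$, which holds by Theorem \ref{thm_maximizer} because $C^p\ge A/|\Sigma|_g$.

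For part (ii) I would combine the saturation statement with the conformal transformation law for mean curvature and with the variational inequality satisfied by a minimizing hull. Writing $H$ for the mean curvature of $\Sigma$ in $(M,g)$ with respect to the unit normal $\nu$ pointing into $M$, the law for $\ol g=u^{k}g$ reads
\[
\ol H=u^{-\frac{2}{n-2}}\Big(H+\tfrac{2(n-1)}{n-2}\,\tfrac{\partial_\nu u}{u}\Big).
\]
The lower bound $\ol H\ge0$ is exactly the statement that $\tilde\Sigma$ is a minimizing hull: on the free part $\tilde\Sigma\sm\Sigma$ it is a smooth minimal hypersurface ($\ol H=0$), while on the contact set it can only be pushed inward without decreasing area, which forces the $\ol g$-mean curvature there to be nonnegative (in the $C^{1,1}$/almost-everywhere sense available for $3\le n\le7$; see section 1 of \cite{imcf}). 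For the upper bound, saturation gives $u=C$, so $u^{-\frac{2}{n-2}}=C^{-\frac{2}{n-2}}$; moreover $u$ attains its global maximum $C$ along $\Sigma$ on this set, so the inward derivative satisfies $\partial_\nu u\le0$ (Hopf lemma) and the bracketed term is at most $H$. Hence $0\le\ol H\le C^{-\frac{2}{n-2}}H\le\eta_0 C^{-\frac{2}{n-2}}$ with $\eta_0=\sup_\Sigma\max(H,0)$, a constant depending only on $g$. (The two bounds also force $H\ge0$ on the contact set, which is consistent.)

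Everything therefore rests on the saturation lemma, which I expect to be the main obstacle. The natural mechanism is a competitor argument: if $f<C$ on a subset of $\tilde\Sigma\cap\Sigma$ of positive $g$-measure, raise $f$ to $C$ there and redistribute the excess area back onto the part of $\Sigma$ lying strictly inside $\tilde\Sigma$, producing a metric in $\ol{[g]_h}$ with the same boundary area $A$ and still satisfying $u\le C$ but with strictly larger minimal enclosing area, contradicting maximality of $\ol g$ for $\alpha_C(A)$. The difficulty, flagged by the author in points (i)--(iii) of section \ref{sec_area_profile}, is that the maximizer satisfies no usable first-variation principle: modifying $f$ changes the associated harmonic function $u$ nonlocally through the Poisson kernel (\ref{eqn_assoc_harmonic}), and the minimizing hull $\tilde\Sigma$ may itself jump under the perturbation. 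In particular, a global rescaling to restore the area constraint loses a factor $\big(A/(A+\delta)\big)^{1/p}$ uniformly and defeats the local gain, so the compensation must be kept local and one must control the area of \emph{every} competing enclosure for the perturbed metric, not merely of the old hull. The crucial quantitative point is that, once $f=C$ on the contact set, harmonicity forces $u$ close to $C$ throughout a neighborhood of it, so no enclosure can cheaply detour around it; making this precise, exploiting the assumed smoothness of the maximizer, is where the real work lies.
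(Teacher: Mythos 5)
Your reduction of both conclusions to the saturation statement ($u=C$ almost everywhere on $\tilde\Sigma\cap\Sigma$) is exactly the paper's strategy, and your derivations of (i) and (ii) from it coincide with the paper's (the paper takes $\eta_0=\|H_\Sigma\|_{C^0(\Sigma)}$ and likewise uses the maximum principle to discard the $\partial_\nu u$ term). But the saturation claim is the entire content of the proof, and you do not prove it: you sketch a competitor argument, list the obstructions, and stop at ``this is where the real work lies.'' That is a genuine gap, not a deferrable technicality.

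For the record, the two obstacles you flag are both surmountable, and the paper's resolution is more elementary than the ``control every detour'' program you outline. First, the area constraint: instead of raising $f$ to $C$ on a bad set and rescaling globally, one perturbs by $f_t=f\bigl(1+t(\chi-a)\bigr)$, where $\chi$ is the characteristic function of a positive-measure set $E\subset\tilde\Sigma\cap\Sigma$ on which $u\le C-\epsilon$, and $a=\frac{1}{A}\int_\Sigma f^{p}\chi\,dA$ is chosen precisely so that $\frac{d}{dt}\big|_{t=0}|\Sigma|_{\ol g_t}=0$. The subsequent rescaling to restore $|\Sigma|_{\ol g_t}=A$ exactly is then a second-order correction and cannot defeat a first-order gain, which disposes of your worry about the factor $(A/(A+\delta))^{1/p}$. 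Second, the competing enclosures: one does not need to control arbitrary surfaces, only to show that the first variation of $|S|_{\ol g_t}$ at $t=0$ is positive \emph{uniformly over all minimal area enclosures $S$ of $\Sigma$ with respect to $\ol g$}. Since $\tilde\Sigma$ is the \emph{outermost} such enclosure, every minimal area enclosure $S$ satisfies $\tilde\Sigma\cap\Sigma\subset S\cap\Sigma$, hence contains $E$; a direct computation (dropping the positive contribution of the perturbing harmonic function on $S\sm\Sigma$) gives
\begin{equation*}
\frac{1}{p}\frac{d}{dt}\Big|_{t=0}|S|_{\ol g_t}\;>\;aA-a\,|S|_{\ol g}\;\ge\;a\bigl(A-\alpha(A)\bigr)\;>\;0,
\end{equation*}
which is where the hypothesis $\alpha(A)<A$ enters. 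This uniform positivity over the set of minimizers is what lets one conclude $\min(\Sigma,\ol g_t)$ increases for small $t>0$, contradicting maximality. Your proposal never identifies either of these two devices (first-order area preservation, and uniformity over minimal area enclosures via outermostness), so as written it establishes the theorem only conditionally on an unproven lemma.
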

Note that $\tilde \Sigma$ exists because $u$, and therefore $\ol g$, is smooth by assumption (see the beginning of section \ref{sec_area_profile}). Moreover, $\tilde \Sigma$ has
nonnegative mean curvature with respect to $\ol g$, or else an outward variation would produce a surface of less $\ol g$-area.  Since $\tilde \Sigma \sm \Sigma$
is a minimal surface for $\ol g$, the theorem states that $\tilde \Sigma$ has uniformly small mean curvature (depending on $C$).  The significance of $\alpha(A) < A$ is that it prevents $\Sigma$ from being its own outermost minimal area enclosure: $\Sigma$ has area
$A$ and $\tilde \Sigma$ has area $\alpha_C(A)\leq \alpha(A)$.

\begin{proof}
In the case that $\tilde \Sigma \cap \Sigma$ is empty or has zero $\Hscr^{n-1}$-measure, we are done.  Assume otherwise; we aim to show that
\begin{equation}
u = C \text{ almost-everywhere on the set } \tilde \Sigma \cap \Sigma.
\label{eqn_u_equiv_C}
\end{equation}
Supposing that (\ref{eqn_u_equiv_C}) holds, we complete the proof.  First,
\begin{align*}
A &= |\Sigma|_{\ol g} \geq |\tilde \Sigma \cap \Sigma|_{\ol g} =\int_{\tilde \Sigma \cap \Sigma} u^{p} d\Hscr^{n-1}.
\end{align*}
But by (\ref{eqn_u_equiv_C}),this reduces to $A \geq C^{p}\, \Hscr^{n-1}(\tilde \Sigma \cap \Sigma)$, proving (i).

Recall that $\tilde \Sigma$ is a $C^{1,1}$ surface and has mean curvature $\ol H$ (with respect to $\ol g$) defined almost-everywhere and nonnegative.  
The set $\tilde \Sigma \sm \Sigma$ is a smooth hypersurface with $\ol H = 0$.  (These regularity assertions require $n\leq 7$.)
On the set $\tilde \Sigma \cap \Sigma$, $\ol H$ agrees with the mean curvature $\ol H_\Sigma$ of 
$\Sigma$ with respect to $\ol g$ (see (1.15) of \cite{imcf}).  Then for almost all $y \in \tilde \Sigma \cap \Sigma$,
\begin{equation}
\ol H(y) = \ol H_\Sigma(y) = u(y)^{-\frac{2}{n-2}} H_\Sigma(y) + \frac{2(n-1)}{n-2} u(y)^{-\frac{n}{n-2}} \partial_\nu u(y), 
\end{equation}
having used the law for the transformation of mean curvature under conformal changes.  Here $H_\Sigma$ is the mean curvature of $\Sigma$
with respect to $g$.  Using (\ref{eqn_u_equiv_C}), we have $u(y)=C$; moreover, by the maximum principle, $\partial_\nu(u)(y) < 0$, since
$u$ attains its global maximum value of $C$ at $y$.  Let $\eta_0 = \|H\|_{C^0(\Sigma)}$, so that
$$0 \leq \ol H(y) \leq \eta_0 C^{-\frac{2}{n-2}},$$
for almost all $y \in \tilde \Sigma \cap \Sigma$, proving (ii).

To complete the proof, we use the failure of (\ref{eqn_u_equiv_C}) to construct a valid ``variation'' within $\ol{[g]_h}$ that increases the minimal
enclosing area, contradicting the assumption that $\ol g$ is a maximizer for $\alpha_C(A)$.  If (\ref{eqn_u_equiv_C}) fails,
there exists $E \subset \tilde \Sigma \cap \Sigma$ of positive $\Hscr^{n-1}$-measure and a constant $\epsilon>0 $ such that
$$u \leq C-\epsilon \quad \text{ on } E.$$
Let $\chi: \Sigma \to \R$ be the characteristic function of $E$.  For $t \geq 0$, consider the following
family of functions on $\Sigma$:
$$f_t = f\left(1 + t\left(\chi - a\right)\right),$$
where $a$ is the constant $a= \frac{\int_\Sigma f^{p} \chi dA}{A}$.
For all $t$ sufficiently small, say $t \in [0,t_0)$, $f_t$ is positive and bounded above by $C$ (using the definition of $E$ and $\chi$).

We consider $f_t$ as the boundary data for harmonic functions $u_t$ that approach one at infinity. A formula for $u_t$ is
$$u_t = u + t(w-a(u-\varphi)),$$
where $w$ is harmonic, zero at infinity, with boundary data $f\chi$ on $\Sigma$.
Then $\ol g_t := u_t^{k} g$ is a smooth path of metrics in $\ol{[g]_h}$ passing through $\ol g$ at $t=0$. These metrics are valid test metrics 
for $\alpha_C(A)$, modulo the fact that the boundary area $A(t)= |\Sigma|_{\ol g_t}$ of $\Sigma$ is not necessarily $\leq A$; we address this issue later.
For now, observe that $A(t)$ is stationary to first order at $t=0$ by our choice of the constant $a$:
\begin{equation*}
A'(0) = \frac{d}{dt}\Big|_{t=0} \int_\Sigma f_t^{p} dA = \int_\Sigma p f^{p} \left(\chi-a\right) dA = 0.
\end{equation*}
We proceed to show that the minimal enclosing area $\min(\Sigma, \ol g_t)$ is increasing near $t=0$.  Let $S$ be any minimal area enclosure of $\Sigma$ with respect to 
$\ol g$ (possibly $\tilde \Sigma$ itself).  Since $\tilde \Sigma$ is the \emph{outermost} minimal area enclosure, we see that 
\begin{equation}
\label{eqn_omae_contains}
\tilde \Sigma \cap \Sigma \subset S \cap \Sigma.
\end{equation}
We estimate the rate of change of the area of the fixed surface $S$ along the path of metrics $\ol g_t$:
\begin{align*}
\frac{1}{p} \frac{d}{dt}\Big|_{t=0} |S|_{\ol g_t} &= \frac{1}{p} \frac{d}{dt}\Big|_{t=0} \left(\int_{S \cap \Sigma} f_t^{p} dA + \int_{S \sm \Sigma} u_t^{p} dA\right)\\
	&= \int_{S \cap \Sigma}  f^{p} (\chi -a) + \int_{S \sm \Sigma} u^{p-1} (w-au+a\varphi)\, dA\\
	&> \int_{\tilde \Sigma \cap \Sigma} f^{p} \chi \, dA  - a  \left( \int_{S \cap \Sigma} f^{p} \, dA + \int_{S \sm \Sigma} u^{p} \, dA\right) \\
        &= aA - a |S|_{\ol g}\\
        &\geq a(A - \alpha(A)).
\end{align*}
On the third line, we used (\ref{eqn_omae_contains}) and the fact that $u, w,$ and $\varphi$ are positive on the set $S \sm \Sigma$
(by the maximum principle).  On the last two lines, we used  the fact that $\chi$ is supported in $E \subset \tilde \Sigma \cap \Sigma$, the definition of $a$,
and the fact that $|S|_{\ol g} = \min(\Sigma, \ol g) = \alpha_C(A) \leq \alpha(A)$.  The last term is positive, since $\alpha(A)<A$ by
hypothesis.
This shows that the rate of change of areas of all minimal area enclosures is uniformly positive at $t=0$, proving that 
$\min(\Sigma, \ol g_t)$ is increasing near $t=0$.

To remedy the fact that $\ol g_t$ does not fix the boundary area for $t>0$, define
$\tilde f_t = \frac{A^{1/p}}{A(t)^{1/p}} f_t$,
and consider the harmonic functions $\tilde u_t$ with boundary data $\tilde f_t$.  Then by construction, the metrics 
$\tilde g_t = \tilde u_t^{k} g$ have boundary area equal to $A$,
and are valid test metrics for $\alpha_C(A)$.  Since $A'(0)=0$, first derivative computations at $t=0$ agree for $\ol g_t$ and $\tilde g_t$; 
by the above, $\min(\Sigma, \tilde g_t)$ is increasing near $t=0$.  This contradicts the assumption that $\ol g_0 = \ol g$ was a maximizer 
of $\alpha_C(A)$.  

%


\end{proof}
In section \ref{sec_applications}, we propose a conjecture regarding maximizers $\ol g$ of $\alpha_C(A)$ in general, without \emph{a priori} assumptions
on regularity.

\section{Examples}
\label{sec_examples}
Suppose $M$ is $\R^n$ minus the unit open ball centered at the origin, equipped with the flat metric $g$.  The function $\varphi(x) = 1-\frac{1}{|x|^{n-2}}$
is harmonic, vanishes on $\Sigma=\partial M$, and approaches one at infinity.  Now it is straightforward to compute that $I_1 = -2$ and $I_2 = 2$, so that
$$\mu(A) = -2 + 2 \left(\frac{A}{\omega_{n-1}}\right)^{1/p}$$
by Theorem \ref{thm_mu}.  Recall from section \ref{sec_definitions} that $(M,g)$ is in the same harmonic conformal class as the Schwarzschild metric of mass 2.
See figure \ref{fig_mu_plot} for a plot of $\mu$ for $n=3$.
\begin{figure}[ht]
\caption{Plot of mass profile function}
\begin{center}
\includegraphics[scale=1.0]{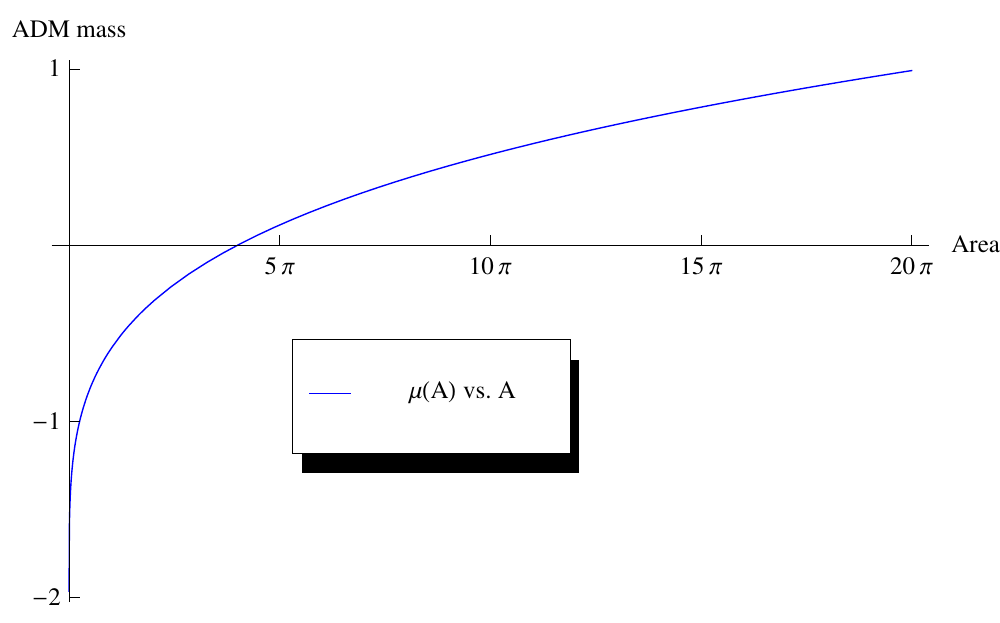} 
\end{center}
\scriptsize{Above is a plot of $\mu(A)$ vs. $A$ for the harmonic conformal class of $\R^3$ minus a unit ball.}
\label{fig_mu_plot}
\end{figure}


Next, to address $\alpha(A)$ for harmonic conformal class of $(M,g)$, let $u_A$ be the spherically-symmetric harmonic function that is one 
at infinity such that $(\R^n, u_A^{k} g)$ has boundary area equal to $A$, given explicitly by:
$$u_A(x) = 1 + \frac{\left(\frac{A}{\omega_{n-1}}\right)^{1/p}-1}{|x|^{n-2}}.$$
To aid with the discussion, we recall that formula (\ref{eqn_schwarz_metric}) extends
to define a metric on $\R^n \sm \{0\}$ with two asymptotically flat ends, with a reflection symmetry across the Euclidean sphere of radius $\left(\frac{m}{2}\right)^{\frac{1}{n-2}}$,
called the horizon.  The horizon is a minimal surface for the Schwarzschild metric.  Let $m=2\left(\frac{A}{\omega_{n-1}}\right)^{1/p}-2$, so we see that $(M, u_A^{k} g)$ is isometric to a 
subset of the two-ended Schwarzschild manifold of mass $m$.  

If $m < 2$ the boundary $\Sigma$ (the Euclidean unit sphere) is its own outermost minimal area enclosure, and therefore has area $A$, essentially because $(M,u_A^{k} g)$ excludes the horizon.  
If $m > 2$, $(M,u_A^{k} g)$ includes the horizon of the Schwarzschild manifold, which is the outermost minimal area enclosure $\Sigma$ and has area
$(2m)^{\frac{n-1}{n-2}} \omega_{n-1}$.  In the borderline case $m=2$, the boundary of $M$ agrees with the horizon.
These observations allow us to give a formula
for the minimal enclosing area as a function of $A$:
 $$\min(\Sigma, u_A^{k} g) = \begin{cases}
				  A, & \text{if } A \leq 2^{p} \omega_{n-1}\\
				  \left(\left(\frac{A}{\omega_{n-1}}\right)^{1/p} - 1\right)^{\frac{n-1}{n-2}} 2^{p}\omega_{n-1} , & \text{if } A > 2^{p} \omega_{n-1}
                              \end{cases}.$$
We conjecture that the spherically symmetric metric $u_A^{k} g$ is a maximizer for $\alpha_C(A)$, for all $C \geq \left(\frac{A}{\omega_{n-1}}\right)^{1/p}$.
This would immediately imply that $\alpha(A)$ is given by the above formula for $\min(\Sigma, u_A^{k} g)$.
Without proof, we remark that the above spherically-symmetric metrics
$u_A^{k} g$ are \emph{local} maxima for the minimal enclosing area among metrics in $\ol{[g]_h}$ that have boundary area $A$.  
See figure \ref{fig_alpha_plot} for a plot of this conjectured form of $\alpha$.
\begin{figure}[ht]
\caption{Plot of area profile function}
\begin{center}
\includegraphics[scale=1.0]{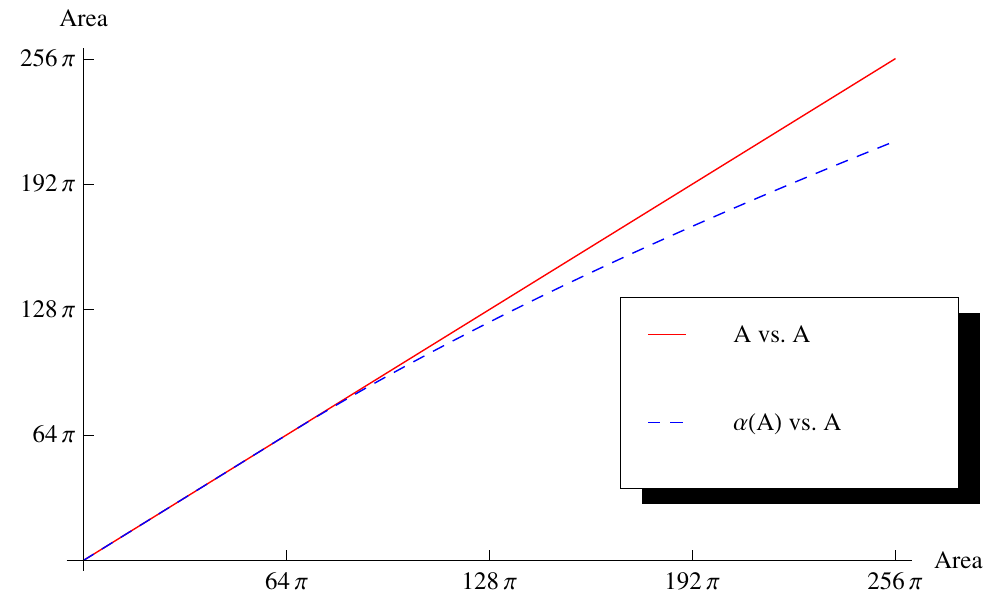} 
\end{center}
\scriptsize{Above is a plot of the conjectured form of $\alpha(A)$ vs. $A$ for the harmonic conformal class of $\R^3$ minus a unit ball, overlaid
with a plot of $A$ vs. $A$ for comparison.  The two functions agree precisely on the interval $[0,64\pi]$.}
\label{fig_alpha_plot}
\end{figure}
If $\alpha(A)$ does indeed have the above form in this example, then we remark that we have equality
$$\mu(A) = \frac{1}{2} \left(\frac{\alpha(A)}{\omega_{n-1}}\right)^{\frac{n-2}{n-1}} \quad \text{ for } A > 2^{p} \omega_{n-1}.$$
Finally, we point out that the metrics $u_A^{k} g$ behave consistently with Theorem \ref{thm_mean_curv}: in the case $\alpha(A)<A$, the surface $\tilde \Sigma$
for $u_A^{k} g$ is disjoint from $\Sigma$ and therefore has zero mean curvature.

\section{Conjectured applications}
\label{sec_applications}
In this section we present a natural conjecture regarding the function $\alpha(A)$.  Assuming this conjecture, we deduce statements relating
the functions $\mu(A)$, $\alpha(A)$ and the numerical invariants $I_1$ and $I_2$.  One consequence is a general estimate of the ADM mass
of an asymptotically flat manifold of nonnegative scalar curvature with compact boundary.

First we recall the Riemannian Penrose inequality, proved as stated below by Bray \cite{bray_RPI} (for dimension $n=3$) 
and later by Bray and Lee \cite{bray_lee} (for $3 \leq n\leq 7$).  Huisken and Ilmanen gave a proof for $n=3$, with $A$
replaced by the area of the largest connected component of $\partial M$ \cite{imcf}.
\begin{thm}
Let $(M^n,g)$ be asymptotically flat of dimension $3 \leq n \leq 7$ with nonnegative scalar curvature.  Suppose the boundary
$\Sigma = \partial M$ has area $A$, zero mean curvature, and every surface enclosing $\Sigma$ has area strictly greater than $A$.  Then
$$m_{ADM}(g) \geq \frac{1}{2}\left(\frac{A}{\omega_{n-1}}\right)^{\frac{n-2}{n-1}}.$$
Moreover, if equality holds and $M$ is a spin manifold (or if $n=3$), then $(M,g)$ is isometric to the Schwarzschild manifold of mass $m_{ADM}(g)$.
\end{thm}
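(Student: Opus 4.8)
The plan is to follow Bray's conformal flow of metrics. Starting from $(M,g)=(M,g_0)$, whose boundary $\Sigma=\Sigma_0$ is by hypothesis a strictly outer-minimizing minimal surface, I would construct a one-parameter family of conformally related metrics $g_t = u_t^{4/(n-2)} g_0$ engineered so that (a) each $g_t$ has nonnegative scalar curvature, (b) the area of the evolving horizon is preserved, and (c) the ADM mass is non-increasing. Since the flow limits onto a Schwarzschild metric, for which the inequality is an equality, properties (b) and (c) together force the initial mass to dominate the Schwarzschild value, yielding the theorem.

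Concretely, I would set $u_0\equiv 1$ and $\frac{d}{dt}u_t = v_t$, where $v_t$ is the $g_0$-harmonic function on the region exterior to the current horizon $\Sigma_t$ that vanishes on $\Sigma_t$ (extended by zero inside) and tends to $-e^{-t}$ at infinity. Here $\Sigma_t$ is the outermost minimal area enclosure of $\Sigma_0$ with respect to $g_t$; the minimizing-hull machinery of section 1 of \cite{imcf} guarantees $\Sigma_t$ exists and, crucially, is $C^{1,1}$-regular precisely for $3\le n\le 7$ (the Simons cone obstructs regularity in dimension $8$ and higher, which is one source of the dimension restriction). Because $v_t$ is harmonic off the horizon, formula (\ref{eqn_conf_laplacian}) shows the conformal change preserves $R\ge 0$ in the exterior, and one checks the horizons are nested, $\Sigma_s\subset\Sigma_t$ for $s<t$, so the region cut off by the horizon only grows.

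I would then establish the two monotonicity statements: that $A(t):=|\Sigma_t|_{g_t}$ is constant and that $m(t):=m_{ADM}(g_t)$ is non-increasing. Constancy of $A(t)$ follows from a direct computation using that $v_t$ vanishes on $\Sigma_t$, together with a careful accounting of the (possibly discontinuous) jumps of the horizon, which I would control by proving $A(t)$ continuous and the horizon sweep monotone. The hard part, and the genuine heart of the argument, is the mass monotonicity $m'(t)\le 0$. I would reflect the exterior region across $\Sigma_t$, which is minimal (zero mean curvature) with respect to $g_t$, to form a doubled manifold with two asymptotically flat ends and nonnegative scalar curvature in the distributional sense (minimality ensures no negative singular contribution along the seam), and then apply the positive mass theorem to this doubled space to pin down the sign of $m'(t)$. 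This is where the hypothesis enters most forcefully: the positive mass theorem is available for $3\le n\le 7$ by minimal-surface methods and in all dimensions for spin manifolds via Witten's argument, which is exactly the dichotomy appearing in the rigidity statement.

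Finally, I would analyze the limit $t\to\infty$, showing that (after the built-in $e^{-t}$ rescaling) $(M,g_t)$ converges, exterior to its horizon, to a Schwarzschild metric whose horizon area is the preserved value $A$, hence of mass $\frac{1}{2}\left(\frac{A}{\omega_{n-1}}\right)^{(n-2)/(n-1)}$. Combining with $m(0)\ge \lim_{t\to\infty} m(t)$ gives the inequality. For rigidity, equality forces $m(t)$ constant, so $m'(t)\equiv 0$; the equality case of the positive mass theorem applied to each doubled manifold then forces flatness, which propagates back to show $(M,g_0)$ was Schwarzschild from the outset. It is precisely this appeal to the \emph{rigidity} of the positive mass theorem that requires either $n=3$ or the spin hypothesis.
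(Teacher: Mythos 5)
The paper does not prove this theorem: it is quoted as a known result, with the proof attributed to \cite{bray_RPI} for $n=3$ and to \cite{bray_lee} for $3\le n\le 7$ (and to \cite{imcf} for the single-component variant). Your sketch is an outline of precisely that cited argument, Bray's conformal flow, so there is no alternative route to compare; the question is only whether the outline is faithful. Most of it is: the flow $\frac{d}{dt}u_t=v_t$ with $v_t$ harmonic exterior to the current horizon, zero on and inside it, and tending to $-e^{-t}$ at infinity; the constancy of the horizon area; the convergence to Schwarzschild; and the role of the positive mass theorem (minimal-surface methods for $3\le n\le 7$, spin for the rigidity statement in $n\ge 4$) all match Bray and Bray--Lee.

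The one place your outline elides the genuinely hard content is the mass monotonicity. Reflecting the exterior region across the minimal surface $\Sigma_t$ and ``applying the positive mass theorem to the doubled space'' does not by itself control $m'(t)$: the doubled manifold has two asymptotically flat ends, and its ADM mass at either end is essentially $m(t)$, not $-m'(t)$. Bray's actual argument conformally compactifies one of the two ends to a point using a Green's-function-type harmonic factor, producing a complete one-ended asymptotically flat manifold of nonnegative scalar curvature whose ADM mass is a positive multiple of $-m'(t)$ up to error terms that must be estimated; only then does the positive mass theorem yield $m'(t)\le 0$, and only its rigidity case yields the Schwarzschild characterization. Relatedly, the flow is constructed as a limit of discretized flows, and the continuity and differentiability of $t\mapsto A(t)$ and $t\mapsto m(t)$ across jumps of the outermost minimal area enclosure is a substantial technical matter rather than a bookkeeping step. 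As a roadmap of the cited proof your proposal is accurate; as a proof it is missing the construction that actually produces the key differential inequality.
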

We remark that the harmonic conformal class was a crucial element in both the Bray and Bray--Lee proofs.  For the remainder of the
article we assume, unless noted otherwise, that $(M,g)$ has dimension $3 \leq n \leq 7$ and nonnegative scalar curvature (but not necessarily minimal boundary).

Consider a maximizer $\ol g_C$ for $\alpha_C(A)<A$ with $C$ large.  If we assume $\ol g_C$ is smooth, then based on Theorem \ref{thm_mean_curv}, 
we see that the outermost minimal area enclosure $\tilde \Sigma_C$ is ``close to'' a minimal surface: the mean curvature is uniformly
 bounded by a constant times $C^{-\frac{2}{n-2}}$.  By construction, $\tilde \Sigma_C$ has less area than any surface that encloses it.
Together with the Riemannian Penrose inequality, this behavior suggests that the ADM mass of $\ol g_C$ ought to be bounded from below in
terms of $|\tilde \Sigma_C|_{\ol g_C} = \min(\Sigma, \ol g_C)$ in the limit $C \to \infty$.  


\begin{conj}
\label{conj_rpi} Let $(M,g)$ be an asymptotically flat $n$-manifold, $3 \leq n \leq 7$, of nonnegative scalar curvature, with nonempty, smooth, compact boundary $\Sigma$.
Fix $A > 0$ for which $\alpha(A)<A$.  Let $\ol g_C$ be a maximizer for $\alpha_C(A)$.  Then
$$\lim_{C \to \infty} m_{ADM}(\ol g_C) \geq \lim_{C \to \infty} \frac{1}{2}\left(\frac{\min(\Sigma, \ol g_C)}{\omega_{n-1}}\right)^{\frac{n-2}{n-1}}.$$
In other words, the Riemannian Penrose inequality holds for $(M,\ol g_C)$ in the limit $C \to \infty$.
\end{conj}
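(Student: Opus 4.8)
The plan is to apply a quantitative form of the Riemannian Penrose inequality to the region exterior to the outermost minimal area enclosure $\tilde\Sigma$ of $\ol g_C$, exploiting that this surface is outer-minimizing and has mean curvature tending to zero as $C\to\infty$. Write $\ol g_C = u_C^{k} g$ and let $\tilde\Sigma = \tilde\Sigma_{\ol g_C}$. Since the harmonic conformal class preserves the pointwise sign of scalar curvature, $\ol g_C$ has nonnegative scalar curvature; the exterior region $N_C$, the closure of the unbounded component of $M \sm \tilde\Sigma$, is therefore asymptotically flat with nonnegative scalar curvature, ADM mass $m_{ADM}(\ol g_C)$, and compact boundary $\tilde\Sigma$ which is outer-minimizing (a minimizing hull in the sense of \cite{imcf}) with area $|\tilde\Sigma|_{\ol g_C} = \min(\Sigma,\ol g_C) = \alpha_C(A)$. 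Granting that the maximizer is smooth, Theorem \ref{thm_mean_curv} gives that the mean curvature $\ol H$ of $\tilde\Sigma$ satisfies $0 \le \ol H \le \eta_0 C^{-\frac{2}{n-2}}$ almost everywhere.

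The main step is to convert the smallness of $\ol H$ into a mass estimate. I would first deform $\ol g_C$ conformally, within its own harmonic conformal class on $N_C$, to make $\tilde\Sigma$ exactly minimal: seek $\hat g_C = v^{k}\ol g_C$ with $v$ harmonic on the interior of $N_C$, $v\to 1$ at infinity, and $\partial_\nu v = -\frac{n-2}{2(n-1)}\ol H\, v$ on $\tilde\Sigma$, so that by the conformal transformation law for mean curvature used in Theorem \ref{thm_mean_curv} the $\hat g_C$-mean curvature of $\tilde\Sigma$ vanishes. This is a Robin-type mixed boundary value problem that is a perturbation of size $O(\|\ol H\|_\infty)=O(C^{-\frac{2}{n-2}})$ of the Neumann problem solved by $v\equiv 1$; by the implicit function theorem in suitable weighted spaces it should admit a solution with $v = 1 + O(C^{-\frac{2}{n-2}})$ uniformly. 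Then $\hat g_C$ is asymptotically flat with nonnegative scalar curvature, $\tilde\Sigma$ is minimal and (replacing it by its $\hat g_C$-minimizing hull if necessary, whose area differs by $O(C^{-\frac{2}{n-2}})$) still outer-minimizing, and
$$|\tilde\Sigma|_{\hat g_C} = \alpha_C(A)\big(1 + O(C^{-\frac{2}{n-2}})\big) \quad\text{and}\quad m_{ADM}(\hat g_C) = m_{ADM}(\ol g_C) + O(C^{-\frac{2}{n-2}}),$$
the mass change being the flux $-\frac{2}{(n-2)\omega_{n-1}}\int_{\tilde\Sigma}\partial_\nu v\, dA$ by formula (\ref{eqn_conf_mass_infinity}).

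The Riemannian Penrose inequality of Bray--Lee \cite{bray_lee} applied to $(N_C,\hat g_C)$ then yields $m_{ADM}(\hat g_C)\ge \tfrac12\big(|\tilde\Sigma|_{\hat g_C}/\omega_{n-1}\big)^{\frac{n-2}{n-1}}$; substituting the two expansions above and letting $C\to\infty$, so that $\alpha_C(A)\nearrow\alpha(A)$ and all error terms vanish, gives $\liminf_{C\to\infty} m_{ADM}(\ol g_C) \ge \tfrac12(\alpha(A)/\omega_{n-1})^{\frac{n-2}{n-1}}$, which is the asserted inequality (using $\liminf$ on the left if existence of the limit is not known a priori). In dimension $n=3$ one can bypass the conformal deformation and instead run Huisken--Ilmanen weak inverse mean curvature flow from $\tilde\Sigma$ \cite{imcf}: Geroch monotonicity gives $m_{ADM}(\ol g_C)\ge m_H(\tilde\Sigma)$, and the Hawking-mass deficit $\frac{1}{16\pi}\int_{\tilde\Sigma}\ol H^2\, d\ol A \le \frac{\eta_0^2}{16\pi}C^{-4}\alpha(A)\to 0$ yields the conclusion directly.

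The hard part is removing the a priori smoothness hypothesis on the maximizer, which is exactly the gap flagged before the conjecture: the mean-curvature bound of Theorem \ref{thm_mean_curv} rests on $u_C$ being smooth on $\Sigma$, whereas for a generic maximizer we know only that its boundary data lies in $L^\infty(\Sigma)$, so $\tilde\Sigma$ may meet $\Sigma$ on a set where the geometry is merely $L^{p}$ and $\ol H$ is classically undefined. Closing this seems to require either partial regularity of maximizers for $\alpha_C(A)$ near $\tilde\Sigma\cap\Sigma$, or a weak Penrose inequality valid directly for the generalized metrics in $\ol{[g]_h}$, in which $\tilde\Sigma\cap\Sigma$ carries a distributional mean curvature controlled by $C$. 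A secondary obstacle is that, for $3<n\le 7$, one must verify that the Bray--Lee inequality is quantitatively stable under the conformal deformation above (and that the linearized Robin problem is invertible in the relevant weighted spaces), so that all error terms are genuinely $o(1)$ as $C\to\infty$.
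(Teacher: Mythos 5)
The statement you are trying to prove is stated in the paper as a \emph{conjecture}: the paper offers no proof of it, only the heuristic discussion preceding it in section \ref{sec_applications} (namely, that a smooth maximizer $\ol g_C$ has an outer-minimizing surface $\tilde \Sigma_C$ of area $\alpha_C(A)$ whose mean curvature is $O(C^{-\frac{2}{n-2}})$ by Theorem \ref{thm_mean_curv}, so that the hypotheses of the Riemannian Penrose inequality are ``almost'' satisfied as $C \to \infty$). Your proposal is essentially a more detailed elaboration of that same heuristic --- pass to the exterior of $\tilde\Sigma$, conformally correct the metric by a Robin-type perturbation of size $O(C^{-\frac{2}{n-2}})$ to make $\tilde\Sigma$ exactly minimal, apply Bray--Lee, and let $C \to \infty$ --- and it is a reasonable strategy. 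But it is not a proof, and you have correctly identified the fatal gap yourself: every step rests on the mean curvature bound of Theorem \ref{thm_mean_curv}, which is proved only under the \emph{a priori} assumption that the maximizing conformal factor is smooth and positive on $\Sigma$. The paper explicitly emphasizes (points (i)--(iii) of section \ref{sec_area_profile} and the remarks after Lemma \ref{lemma_push_out}) that a maximizer for $\alpha_C(A)$ is known only to have nonnegative $L^\infty$ boundary data and does not obviously satisfy a variational principle from which regularity could be deduced. Without that regularity, $\tilde\Sigma \cap \Sigma$ has no classical mean curvature and your Robin problem cannot even be posed. So the argument is conditional on an unproven regularity statement that is at least as hard as anything else in the picture.

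Beyond the main gap, a few secondary points in your sketch would need attention even in the smooth case. First, $\tilde\Sigma$ is only $C^{1,1}$, so the mean curvature is defined merely almost everywhere and is at best $L^\infty$; the solvability and uniform estimate $v = 1 + O(C^{-\frac{2}{n-2}})$ for your mixed Robin problem on the exterior of a $C^{1,1}$ hypersurface, and the validity of the conformal transformation law for $\ol H$ there, require justification. Second, since $\partial_\nu v \leq 0$ on $\tilde\Sigma$, formula (\ref{eqn_conf_mass_infinity}) shows the correction \emph{increases} the ADM mass, so the inequality from Bray--Lee bounds $m_{ADM}(\hat g_C)$, not $m_{ADM}(\ol g_C)$, from below; this is harmless only because you can quantify the flux as $O(C^{-\frac{2}{n-2}})$, but the direction of the error must be tracked explicitly. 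Third, you must verify that $\tilde\Sigma$ remains outer-minimizing (in the strict sense required by the Bray--Lee hypotheses) after the conformal correction, not merely up to an $O(C^{-\frac{2}{n-2}})$ area discrepancy. Finally, your proposed $n=3$ shortcut via weak inverse mean curvature flow and Geroch monotonicity fails when $\tilde\Sigma$ is disconnected: as the paper notes in its discussion of Robbins' work, the Huisken--Ilmanen argument controls only the Hawking mass of a single connected component, which is why the conformal-flow form of the Penrose inequality is invoked in the first place.
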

Assuming the conjecture, we prove some consequences.  
\begin{prop}
\label{thm_mu_alpha}
Assume that Conjecture \ref{conj_rpi} is true.  For all values of $A>0$ that satisfy $\alpha(A) < A$, we have the following inequality for the mass and area profile functions:
$$\mu(A) \geq \frac{1}{2}\left(\frac{\alpha(A)}{\omega_{n-1}}\right)^{\frac{n-2}{n-1}}.$$
\end{prop}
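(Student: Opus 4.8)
The plan is to chase the inequality through a limiting argument built directly on Conjecture \ref{conj_rpi}. Fix $A>0$ with $\alpha(A)<A$, and for each $C$ sufficiently large let $\ol g_C$ be a maximizer for $\alpha_C(A)$ furnished by Theorem \ref{thm_maximizer}, so that $\min(\Sigma,\ol g_C)=\alpha_C(A)$ and the boundary area equals $A$. The conjecture immediately gives
$$\lim_{C\to\infty} m_{ADM}(\ol g_C) \geq \lim_{C\to\infty} \frac{1}{2}\left(\frac{\min(\Sigma,\ol g_C)}{\omega_{n-1}}\right)^{\frac{n-2}{n-1}}.$$
The right-hand side is easy to evaluate: since $\alpha_C(A)\nearrow\alpha(A)$ as $C\to\infty$ (the functions $\alpha_C$ are nondecreasing in $C$ and converge to $\alpha$ by definition) and $t\mapsto t^{\frac{n-2}{n-1}}$ is continuous, the limit equals $\tfrac{1}{2}\left(\alpha(A)/\omega_{n-1}\right)^{\frac{n-2}{n-1}}$. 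So it remains to bound the left-hand side above by $\mu(A)$.

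The key observation for the left-hand side is that each maximizer $\ol g_C$ lies in $\ol{[g]_h}$ and satisfies $|\Sigma|_{\ol g_C}=A$, so it is a competitor — in the generalized sense — for the mass profile problem at boundary area $A$. First I would argue that $m_{ADM}(\ol g_C)\leq \mu(A)$ for every such $C$. The cleanest route is to recall that formula (\ref{eqn_conf_mass_infinity}) expresses the ADM mass of $u^{k}g$ purely in terms of the coefficient of $|x|^{2-n}$ in the expansion of $u$ at infinity, equivalently $m_{ADM}(\ol g_C)=I_1+2\int_\Sigma V f_C\,dA$ by Lemma \ref{lemma_compute_term}, where $f_C$ is the $L^{p}(\Sigma)$ boundary datum associated to $\ol g_C$. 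Since $\mu(A)$ was defined by supremizing the ADM mass over the smooth class $[g]_h$ subject to $|\Sigma|\leq A$, and since by Theorem \ref{thm_mu} the formula $\mu(A)=I_1+(2I_2)^{1/2}(A/\omega_{n-1})^{1/p}$ is attained by the optimal boundary datum $f_0\propto V^{\frac{n-2}{n}}$, the bound $\int_\Sigma V f_C\,dA \leq \int_\Sigma V f_0\,dA$ follows from the Hölder/Cauchy--Schwarz inequality that underlies Theorem \ref{thm_mu}: maximizing $\int_\Sigma V f\,dA$ subject to $\int_\Sigma f^{p}\,dA\leq A$ gives exactly the value defining $\mu(A)$, regardless of whether $f$ is smooth or merely $L^{p}$. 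Hence $m_{ADM}(\ol g_C)\leq\mu(A)$ for all large $C$, and therefore $\lim_{C\to\infty} m_{ADM}(\ol g_C)\leq\mu(A)$.

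Combining the two halves yields
$$\mu(A)\geq\lim_{C\to\infty} m_{ADM}(\ol g_C)\geq \frac{1}{2}\left(\frac{\alpha(A)}{\omega_{n-1}}\right)^{\frac{n-2}{n-1}},$$
which is the claim. I expect the main obstacle to be the uniform comparison $m_{ADM}(\ol g_C)\leq\mu(A)$: although $\mu(A)$ was defined as a supremum over the \emph{smooth} class $[g]_h$ while the $\ol g_C$ live in the generalized class $\ol{[g]_h}$, the linear-functional representation of the mass via $V$ (Lemma \ref{lemma_compute_term}) extends verbatim to $L^{p}$ boundary data, so the optimization problem computing $\mu(A)$ has the same value over $\ol{[g]_h}$; one should check that this extension is legitimate, i.e. that the mass formula (\ref{eqn_conf_mass_infinity}) and the identity $m_{ADM}=I_1+2\int_\Sigma Vf\,dA$ remain valid for the harmonic functions associated to merely-$L^{p}$ data. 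Granting that, the rest is a routine continuity argument in $C$.
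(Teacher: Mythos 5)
Your proposal is correct, and its overall skeleton is the same as the paper's: invoke Conjecture \ref{conj_rpi}, identify $\lim_{C\to\infty}\min(\Sigma,\ol g_C)=\lim_{C\to\infty}\alpha_C(A)=\alpha(A)$, and then cap $\lim_{C\to\infty} m_{ADM}(\ol g_C)$ by $\mu(A)$. Where you diverge is in how that cap is justified. The paper argues by density: any harmonic $u$ with $L^{p}$ boundary data can be approximated in $L^{p}(\Sigma)$ by one with smooth data so that the ADM masses differ by less than $\epsilon$, hence the supremum defining $\mu(A)$ is unchanged when extended to $\ol{[g]_h}$, and $\ol g_C$ is literally a competitor. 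You instead compute the mass explicitly as the linear functional $m_{ADM}=I_1+2\int_\Sigma Vf\,dA$ via Lemma \ref{lemma_compute_term} and apply H\"older with conjugate exponent $\tfrac{2(n-1)}{n}$ under the constraint $\int_\Sigma f^{p}\,dA\le A$, recovering exactly the value $I_1+2A^{1/p}\bigl(\int_\Sigma V^{\frac{2(n-1)}{n}}dA\bigr)^{\frac{n}{2(n-1)}}=\mu(A)$ directly for $L^{p}$ data. Your route is more explicit and avoids the approximation step, at the cost of leaning on the validity of Lemma \ref{lemma_compute_term} for merely-$L^{p}$ boundary data (which you rightly flag, and which the paper itself asserts elsewhere, in the proof of Theorem \ref{thm_I1_I2}); the paper's density argument is softer and would survive even without the closed-form expression for $\mu$. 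Both are sound.
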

\begin{proof}
Suppose $\alpha(A) < A$.  For all $C$ sufficiently large, $\min(\Sigma, \ol g_C) = \alpha_C(A)$ by Theorem \ref{thm_maximizer}.  Since
$\alpha(A) = \lim_{C \to \infty} \alpha_C(A)$, Conjecture \ref{conj_rpi} can be written:
\begin{equation}
\lim_{C \to \infty} m_{ADM}(\ol g_C) \geq \frac{1}{2}\left(\frac{\alpha(A)}{\omega_{n-1}}\right)^{\frac{n-2}{n-1}}.
\label{eqn_mass_C}
\end{equation}
We claim the left-hand side is at most $\mu(A)$.  To see this, note that a harmonic function $u$ with $L^{p}$ boundary data can be approximated
in $L^{p}(\Sigma)$ norm by a harmonic function $u_\epsilon$ with smooth boundary data so that the ADM masses of $u^{k} g$ and $u_\epsilon^{k} g$ differ
by less than $\epsilon$.  Thus, the value of $\mu(A)$ 
in Definition \ref{def_mu} is unchanged if the supremum is taken over the generalized harmonic conformal class.  In
particular, $\ol g_C$ can be viewed as a valid test metric for $\mu(A)$, so we have
$$\mu(A) \geq m_{ADM}(\ol g_C)$$
for each $C$.  Taking the limit $C \to \infty$ completes the proof.
\end{proof}
We emphasize the point that while $\mu(A)$ is determined solely from the numerical invariants $I_1$ and $I_2$, $\alpha(A)$ involves
much more of the global geometry of $(M,g)$ -- the areas of hypersurfaces.  One interesting immediate consequence is the following
upper bound for the minimal enclosing area.  If $\alpha(A)< A$ (which holds for all $A>0$ sufficiently large by Proposition
\ref{prop_alpha_continuous} and Corollary \ref{cor_alpha_A}), then for all metrics $g' \in [g]_h$ with boundary area at most $A$:
$$\mu(A) \geq \frac{1}{2}\left(\frac{\min(\Sigma,g')}{\omega_{n-1}}\right)^{\frac{n-2}{n-1}},$$
as follows from the previous proposition and the definition of $\alpha$. By Theorem \ref{thm_mu}, the left-hand side can be 
computed explicitly in terms of $I_1$, $I_2$, and $A$.  This is an example of $I_1$ and $I_2$ giving control on
the geometry of metrics in $[g]_h$.

Next, we prove an inequality for the numerical invariants $I_1$ and $I_2$.
\begin{thm} 
\label{thm_I1_I2}
Assume Conjecture \ref{conj_rpi}.  Then the numerical invariants defined in Lemmas \ref{lemma_I1} and \ref{lemma_I2} satisfy:
$$I_1 + I_2 \geq 0.$$
\end{thm}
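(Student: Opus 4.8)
The plan is to reduce the inequality to a single statement about a quadratic. Writing $1/p = \frac{n-2}{2(n-1)}$, one has $\left(\frac{A}{\omega_{n-1}}\right)^{\frac{n-2}{n-1}} = x^2$ with $x = \left(\frac{A}{\omega_{n-1}}\right)^{1/p}$, so the explicit formula of Theorem \ref{thm_mu} converts the Penrose-type bound $\mu(A) \geq \frac12\left(\frac{A}{\omega_{n-1}}\right)^{\frac{n-2}{n-1}}$ into the assertion $q(x) \leq 0$, where
$$q(x) := \tfrac12 x^2 - (2I_2)^{1/2}\, x - I_1.$$
Since $q$ is an upward parabola whose vertex value is $q\big((2I_2)^{1/2}\big) = -(I_1 + I_2)$, the existence of even a single $x_0 > 0$ with $q(x_0) \leq 0$ forces $\min q \leq 0$, i.e. $I_1 + I_2 \geq 0$. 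Thus the entire task is to produce one value of $A$ for which $\mu(A) \geq \frac12\left(\frac{A}{\omega_{n-1}}\right)^{\frac{n-2}{n-1}}$, with $A$ itself (not merely $\alpha(A)$) on the right-hand side.

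Proposition \ref{thm_mu_alpha} supplies $\mu(A) \geq \frac12\left(\frac{\alpha(A)}{\omega_{n-1}}\right)^{\frac{n-2}{n-1}}$ whenever $\alpha(A) < A$, but there the weaker quantity $\alpha(A)$ appears, and this is only as strong as $q(x)\le 0$ in the limiting regime $\alpha(A) = A$. The key geometric input is therefore the transition value
$$A^* = \sup\{A > 0 : \alpha(A) = A\},$$
which is finite because Proposition \ref{prop_alpha_continuous}(ii) together with Corollary \ref{cor_alpha_A} guarantees $\alpha(A) < A$ for all large $A$. First I would treat the generic case $A^* > 0$: since $\alpha$ and the identity are continuous (Proposition \ref{prop_alpha_continuous}), the set $\{A : \alpha(A) = A\}$ is closed, so $\alpha(A^*) = A^*$, while $\alpha(A) < A$ for $A > A^*$. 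Applying Proposition \ref{thm_mu_alpha} for $A > A^*$ and letting $A \downarrow A^*$, continuity of $\mu$ (Theorem \ref{thm_mu}) and of $\alpha$ yields $\mu(A^*) \geq \frac12\left(\frac{A^*}{\omega_{n-1}}\right)^{\frac{n-2}{n-1}}$, that is $q(x^*) \leq 0$ with $x^* = \left(\frac{A^*}{\omega_{n-1}}\right)^{1/p} > 0$; the quadratic observation then closes the argument.

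The remaining case $A^* = 0$ (equivalently $\alpha(A) < A$ for every $A > 0$) I would handle by sending $A \to 0^+$: Proposition \ref{thm_mu_alpha} now applies for all $A$, and since $\mu(A) \to I_1$ while $\frac12\left(\frac{\alpha(A)}{\omega_{n-1}}\right)^{\frac{n-2}{n-1}} \to 0$ (because $0 < \alpha(A) \leq A \to 0$), we obtain $I_1 \geq 0$, whence $I_1 + I_2 \geq 0$ using $I_2 > 0$. The main obstacle is the limiting step in the case $A^* > 0$: Proposition \ref{thm_mu_alpha} is available only under the strict hypothesis $\alpha(A) < A$, so the sharp inequality at the threshold $A = A^*$ must be recovered as a one-sided limit, which is exactly why the continuity of both $\mu$ and $\alpha$ established earlier is indispensable.
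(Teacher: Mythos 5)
Your proof is correct, but it follows a genuinely different route from the paper's. The paper's proof of Theorem \ref{thm_I1_I2} does not pass through the formula for $\mu(A)$ at all: it picks, for each $\epsilon>0$, an $A$ with $A-\alpha(A)=\epsilon$ via the intermediate value theorem, applies the conjectured Penrose bound directly to a maximizer $\ol g = u^{k}g$ of $\alpha_C(A)$, rewrites $m_{ADM}(\ol g)$ via formula (\ref{eqn_conf_mass}) and Lemma \ref{lemma_compute_term} in terms of the boundary data $f$, and then takes the infimum of the resulting strictly convex functional over all nonnegative $f\in L^{p}(\Sigma)$, solving the Euler--Lagrange equation explicitly to arrive at the mass estimate (\ref{eqn_mass_estimate}), which is then observed to be equivalent to $I_1+I_2\geq 0$. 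You instead exploit two results already in hand --- the closed form $\mu(A)=I_1+(2I_2)^{1/2}x$ with $x=(A/\omega_{n-1})^{1/p}$ from Theorem \ref{thm_mu}, and Proposition \ref{thm_mu_alpha} --- and reduce everything to producing one $A$ at which $\mu(A)\geq\tfrac12(A/\omega_{n-1})^{\frac{n-2}{n-1}}$ holds with $A$ itself on the right, which your threshold value $A^*=\sup\{A:\alpha(A)=A\}$ supplies via the one-sided limit $A\downarrow A^*$ (the continuity of $\alpha$ from Proposition \ref{prop_alpha_continuous} and the monotonicity from Corollary \ref{cor_alpha_A} make this step airtight, and your separate treatment of the degenerate case $\alpha(A)<A$ for all $A$, yielding the stronger conclusion $I_1\geq 0$, is also correct). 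Your argument is shorter and avoids the variational computation entirely, essentially because Theorem \ref{thm_mu} already encodes the outcome of that optimization; the trade-off is that the paper's route produces the intermediate inequality (\ref{eqn_mass_estimate}) in a form that is used again immediately afterward for Corollary \ref{cor_rzi} and the zero area singularity discussion, whereas your quadratic-discriminant argument delivers only the final inequality (plus the observation that $q(x^*)\leq 0$ at the specific transition point, which is a small bonus the paper does not record).
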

Recall that by definition, $I_2 > 0$.
\begin{proof}
Let $\epsilon > 0$ be given.  As a consequence of Proposition \ref{prop_alpha_continuous}, Corollary \ref{cor_alpha_A}, and the intermediate
value theorem, there exists $A>0$ such that
\begin{equation}
\label{eqn_A_epsilon}
A - \alpha(A) = \epsilon.
\end{equation}
By (\ref{eqn_mass_C}) (which uses the conjecture), there exists $C>0$ sufficiently large so that a maximizer $\ol g=u^{k} g$ of $\alpha_C(A)$ satisfies:
$$m_{ADM}(\ol g) \geq \frac{1}{2}\left(\frac{\alpha(A)}{\omega_{n-1}}\right)^{\frac{n-2}{n-1}} - \epsilon.$$
The left-hand side may be computed using formula (\ref{eqn_conf_mass}), and the right hand side with (\ref{eqn_A_epsilon}):
$$m_{ADM}(g) \geq \frac{1}{2}\left(\frac{A-\epsilon}{\omega_{n-1}}\right)^{\frac{n-2}{n-1}} + \frac{2}{(n-2)\omega_{n-1}} \int_{S_r} \partial_\nu u\, dA - \epsilon.$$
Applying Lemma \ref{lemma_compute_term} (which still holds for $L^{p}$ boundary data), we have
$$m_{ADM}(g) \geq \frac{1}{2}\left(\frac{\int_\Sigma f^{p} dA-\epsilon}{\omega_{n-1}}\right)^{\frac{n-2}{n-1}} - 2\int_{\Sigma} Vf \, dA +2C_g(\Sigma)- \epsilon,$$
where $f$ is the function on $\Sigma$ associated to $u$.  By taking the infimum of the above over all nonnegative functions
$f$ in $L^{p}(\Sigma)$ and letting $\epsilon \to 0$, we obtain:
\begin{equation}
m_{ADM}(g) \geq 2C_g(\Sigma) + \inf_{f \in L^{p}(\Sigma), f \geq 0} \left\{\frac{1}{2}\left(\frac{\int_\Sigma f^{p} dA}{\omega_{n-1}}\right)^{\frac{n-2}{n-1}} - 2\int_{\Sigma} Vf \, dA\right\}.
\label{eqn_mass_ineq_prelim}
\end{equation}
The term inside the braces can be viewed as a strictly convex functional on functions $f \in L^{p}(\Sigma)$.  Using an Euler--Lagrange approach,
one can compute that the unique global minimum of the functional is attained by
$$f(x) = 2\left(\omega_{n-1}\right)^{\frac{n-2}{n-1}} \left(\int_\Sigma V^{\frac{2(n-1)}{n}}dA\right)^{\frac{1}{n-1}}  V(x)^{\frac{n-2}{n}},$$
a smooth function on $\Sigma$ (see Chapter 5 of \cite{thesis} for the full details in the case $n=3$).  
Using the formula $V = \frac{1}{(n-2)\omega_{n-1}} \partial_\nu \varphi$ and our expression for the minimizer $f$, we deduce from
(\ref{eqn_mass_ineq_prelim}) that
\begin{equation}
\label{eqn_mass_estimate}
m_{ADM}(g) \geq 2C_g(\Sigma) - \frac{2}{(n-2)^2} \left(\frac{1}{\omega_{n-1}} \int_\Sigma (\partial_\nu \varphi)^{\frac{2(n-1)}{n}} dA\right)^{\frac{n}{n-1}}.
\end{equation}
Recalling the definitions of the numerical invariants $I_1$ and $I_2$ from Lemmas \ref{lemma_I1} and \ref{lemma_I2}, 
we see that the above inequality is equivalent to the statement $I_1 + I_2 \geq 0$.
\end{proof}
One interpretation of (\ref{eqn_mass_estimate}) is a general estimate for the ADM mass of an asymptotically flat manifold of nonnegative scalar curvature with
compact boundary.  In the case that the Riemannian Penrose inequality applies to $(M,g)$, inequality (\ref{eqn_mass_estimate}) is weaker.  However, we emphasize that (\ref{eqn_mass_estimate}) requires 
no assumptions on the boundary geometry, such as minimality.  

\subsection{Zero area singularities}
We give one final interpretation of Theorem \ref{thm_I1_I2}:
\begin{cor}
\label{cor_rzi}
Assume Conjecture \ref{conj_rpi}.  Let $\varphi$ be the $g$-harmonic function that vanishes on $\Sigma$ and approaches one at infinity.  Then
the asymptotically flat metric $g'=\varphi^{k} g$ (which is singular on $\Sigma$) satisfies the mass estimate:
\begin{equation}
m_{ADM}(g') \geq -  \frac{2}{(n-2)^2} \left(\frac{1}{\omega_{n-1}} \int_\Sigma (\partial_\nu \varphi)^{\frac{2(n-1)}{n}} dA\right)^{\frac{n}{n-1}}.
\label{eqn_rzi}
\end{equation}
\end{cor}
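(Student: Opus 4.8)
The plan is to recognize that the singular metric $g'=\varphi^{k}g$ has ADM mass equal \emph{exactly} to the invariant $I_1$, at which point the corollary reduces to a direct restatement of the inequality $I_1 + I_2 \geq 0$ furnished by Theorem \ref{thm_I1_I2}. So the entire content is a computation of $m_{ADM}(g')$ followed by a substitution.

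First I would check that $g'$ is a bona fide asymptotically flat Riemannian metric on the interior of $M$, so that its ADM mass is even defined. By the maximum principle the harmonic function $\varphi$ satisfies $0 < \varphi < 1$ on $M \sm \Sigma$, so $g' = \varphi^{k} g$ is a genuine metric away from $\Sigma$; the degeneration $\varphi \to 0$ along $\Sigma$ is precisely the zero area singularity. Since $\varphi \to 1$ at infinity with the usual spherical-harmonic expansion, $g'$ is asymptotically flat with a well-defined ADM mass.

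Next I would compute $m_{ADM}(g')$. The conformal mass-change formula (\ref{eqn_conf_mass_infinity}) is derived purely from the asymptotic geometry at infinity, and therefore applies verbatim with $u=\varphi$ despite the fact that $\varphi$ vanishes on $\Sigma$:
$$m_{ADM}(g') = m_{ADM}(g) - \frac{2}{(n-2)\omega_{n-1}} \int_{S_\infty} \partial_\nu \varphi \, dA.$$
By the very definition of the capacity, the flux integral equals $(n-2)\omega_{n-1}\,C_g(\Sigma)$, so the subtracted term is exactly $2C_g(\Sigma)$. Hence, by the definition of $I_1$ in Lemma \ref{lemma_I1},
$$m_{ADM}(g') = m_{ADM}(g) - 2C_g(\Sigma) = I_1.$$

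Finally I would invoke Theorem \ref{thm_I1_I2}, which gives $I_1 \geq -I_2$, and substitute the definition of $I_2$ from Lemma \ref{lemma_I2} to obtain
$$m_{ADM}(g') = I_1 \geq -I_2 = -\frac{2}{(n-2)^2}\left(\frac{1}{\omega_{n-1}} \int_\Sigma (\partial_\nu \varphi)^{\frac{2(n-1)}{n}} dA\right)^{\frac{n}{n-1}},$$
which is precisely (\ref{eqn_rzi}). The only point requiring care is the justification that the mass formula (\ref{eqn_conf_mass_infinity}) carries over to the degenerate conformal factor $\varphi$; once that is granted, there is no genuine analytic obstacle, and the corollary is simply the geometric reinterpretation of $I_1 + I_2 \geq 0$ as a mass lower bound for the zero-area-singularity metric $g'$.
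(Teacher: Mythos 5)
Your proposal is correct and matches the paper's own argument: the paper likewise observes that $m_{ADM}(g') = m_{ADM}(g) - 2C_g(\Sigma) = I_1$ via formula (\ref{eqn_conf_mass_infinity}) and then reads off the corollary from inequality (\ref{eqn_mass_estimate}), i.e.\ $I_1 + I_2 \geq 0$. Your extra care about applying the conformal mass formula to the degenerate factor $\varphi$ is a sound observation but not a departure from the paper's route.
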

The proof follows from equation (\ref{eqn_mass_estimate}) and the observation that \mbox{$m_{ADM}(g) - 2C_g(\Sigma)$} 
equals $m_{ADM}(g')$ by formula (\ref{eqn_conf_mass_infinity}).  The reason that $g'$ is singular on the boundary is that the conformal factor $\varphi$ vanishes there.  This type of metric singularity is 
an example of a \emph{zero area singularity}, or ZAS, which we now describe.  Following \cites{bray_npms, zas, robbins}, in a manifold with smooth metric on the interior (but not necessarily
on the boundary),  a boundary component $S$ is said to be a zero area singularity if for all sequences of surfaces $\{S_n\}$ converging in the $C^1$ sense
to $S$, the areas of the $S_n$ converge to zero.  Metrics $g'= \varphi^{k} g$ as in the corollary have a ZAS on each boundary component.  Also note that $g'$ has nonnegative scalar curvature because $g$ does and $\varphi$ is harmonic.

The motivating example of a manifold with a zero area singularity is the Schwarzschild metric of negative mass: the metric given in equation (\ref{eqn_schwarz_metric})
with $m<0$ on $\R^n$ minus a ball of radius $\left(\frac{|m|}{2}\right)^{\frac{1}{n-2}}$.  


It is true but not immediately obvious that the right-hand side of (\ref{eqn_rzi}) is intrinsic to
the singularities, in that it depends only on the geometry of $g'$ in any neighborhood of $\Sigma$ (and not
on the data $(g,\varphi)$).
This number is suggestively called the \emph{mass} of $\Sigma$ (or ZAS mass), and equals $m$ for the Schwarzschild
metric of mass $m < 0$.  In general, for ZAS that do not 
necessarily arise from metrics of the form $g'=\varphi^{k} g$, it is still possible to define a meaningful notion
of ZAS mass, a number in $[-\infty,0]$.  Corollary \ref{cor_rzi} implies the statement: in manifolds of nonnegative
scalar curvature that contain zero area singularities $\Sigma$, the ADM mass is bounded below by the ZAS mass:
\begin{equation}
\label{eqn_adm_zas}
m_{ADM} \geq m_{ZAS}(\Sigma). \qquad\qquad\text{(conjectured})
\end{equation}
For a more thorough discussion, we refer the reader to \cites{bray_npms, zas, robbins}.

There are two cases in which inequality (\ref{eqn_adm_zas}) is firmly established without the use of Conjecture \ref{conj_rpi}.  
First, if $n=3$ and $\Sigma=\partial M$ is connected, Robbins \cite{robbins} proved the inequality using weakly-defined inverse mean
curvature flow as developed by Huisken and Ilmanen \cite{imcf}.  If $\Sigma$ is disconnected, however, inverse mean curvature flow
yields no such inequality, not even a weaker version.  The second case for which Corollary \ref{cor_rzi} is known is that in which the 
harmonic conformal class $[g]_h$ contains a metric for which the hypotheses of the Riemannian Penrose inequality hold.
If such a metric exists, Bray showed inequality (\ref{eqn_rzi}) directly from the Riemannian Penrose inequality (c.f. \cites{bray_npms, zas}).  However,
in general, $[g]_h$ need not contain such a metric (see Chapter 2 of \cite{thesis}).


In closing, we make a connection with the positive mass theorem (PMT) of Schoen and Yau \cite{schoen_yau}, proved also for spin manifolds by Witten \cite{witten}. 
Note that the PMT was a key ingredient in the Bray and Bray--Lee proofs of the Riemannian Penrose inequality.
\begin{thm}[Positive mass theorem] 
\label{thm_pmt}
Let $(M,g)$ be a complete, asymptotically flat 
Riemannian $n$-manifold without boundary, with either $3 \leq n \leq 7$ or $M$ a spin manifold.  If $(M,g)$ has nonnegative scalar curvature, then the ADM mass is nonnegative,
and zero if and only if $(M,g)$ is isometric to $\R^n$ with the flat metric.
\end{thm}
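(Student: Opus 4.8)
The plan is to handle the two hypotheses---$3 \le n \le 7$ on the one hand, and $M$ spin on the other---by the two independent classical methods, since no single elementary argument covers both cases simultaneously. Neither approach flows from the constructions of the present paper, so the role of this section is to indicate the route rather than to supply a self-contained argument.

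In the spin case I would carry out Witten's argument via the Dirac operator $D$ of $g$. The engine is the Lichnerowicz--Weitzenb\"ock formula
$$D^2 = \nabla^*\nabla + \tfrac14 R.$$
First I would produce a harmonic spinor $\psi$ (i.e.\ $D\psi = 0$) asymptotic at infinity to a fixed constant spinor $\psi_0$ of unit norm; existence comes from the weighted-Sobolev Fredholm theory of $D$ on asymptotically flat manifolds, solving $D(\psi_0 + \xi)=0$ for a decaying correction $\xi$. Integrating this identity over the region inside a large coordinate sphere $S_r$, applying the divergence theorem, and letting $r \to \infty$, the interior contribution becomes $\int_M \bigl(|\nabla\psi|^2 + \tfrac14 R\,|\psi|^2\bigr)\,dV \ge 0$ since $R \ge 0$, while the boundary integral over $S_\infty$ evaluates, using the asymptotic expansion of $g_{ij}$, to a fixed positive multiple of $m_{ADM}(g)$. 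This gives $m_{ADM}(g) \ge 0$. For rigidity, equality forces $\nabla\psi \equiv 0$ and $R \equiv 0$; letting $\psi_0$ range over a basis of the space of constant spinors yields a maximal family of parallel spinors, which forces the full curvature tensor to vanish, and asymptotic flatness then upgrades Ricci-flatness to flatness, identifying $(M,g)$ isometrically with $\R^n$.

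For $3 \le n \le 7$ without a spin assumption I would instead use the Schoen--Yau minimal-hypersurface descent. Arguing by contradiction, suppose $m_{ADM}(g)<0$. After a deformation/density step reducing to a metric that is harmonically flat (hence scalar-flat) outside a compact set while keeping the mass negative, the negative mass supplies the asymptotic geometry needed to construct a complete area-minimizing hypersurface $\Sigma^{n-1}$ that is asymptotically a coordinate hyperplane. Stability of $\Sigma$, together with the Gauss equation and the second variation of area, forces the induced metric on $\Sigma$ (after a conformal adjustment) to have nonnegative scalar curvature, reducing the problem one dimension lower. Iterating the construction down to dimension two and invoking the Gauss--Bonnet theorem produces a contradiction; the equality case is obtained by propagating the rigidity through each stage of the descent.

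The hard part in the non-spin argument is the existence, regularity, and asymptotic control of the minimizing hypersurface together with the conformal reduction of its induced metric---and this is precisely where the restriction $n \le 7$ is forced, since area-minimizing hypersurfaces can acquire singularities once the ambient dimension reaches $8$. In the spin case the analogous obstacle is purely analytic: establishing the Fredholm theory on weighted spaces that produces the Witten spinor with the prescribed constant asymptotics. Either way, this theorem lies outside the self-contained techniques developed here, which is why we quote it from \cite{schoen_yau} and \cite{witten}.
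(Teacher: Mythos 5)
The paper offers no proof of this statement: it is quoted as a classical theorem of Schoen--Yau and Witten, which is exactly what you conclude should be done. Your sketches of the two standard arguments --- the Witten spinor via the Lichnerowicz formula $D^2=\nabla^*\nabla+\tfrac14 R$ in the spin case, and the Schoen--Yau stable minimal-hypersurface descent (with the $n\le 7$ restriction coming from regularity of area minimizers) in the other --- are accurate summaries of the cited proofs, so there is nothing here to reconcile with the paper.
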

We can view (\ref{eqn_adm_zas}) as a generalization of the PMT: metrics with ZAS are generally incomplete, so 
Theorem \ref{thm_pmt} does not apply.  If we interpret the ZAS mass as quantifying the defect due to the presence of singularities
in terms of their local geometry, then inequality (\ref{eqn_adm_zas}) gives a lower bound for the ADM mass as the size of this defect.
We emphasize that (\ref{eqn_adm_zas}) is unproven in general.  A case of particular interest is when $(M,g)$ contains only ZAS of zero mass: inequality (\ref{eqn_adm_zas}) would establish nonnegativity of the ADM mass.  

\begin{bibdiv}
 \begin{biblist}

\bib{adm}{article}{
   author={Arnowitt, R.},
   author={Deser, S.},
   author={Misner, C. W.},
   title={Coordinate invariance and energy expressions in general relativity},
   journal={Phys. Rev. (2)},
   volume={122},
   date={1961},
   pages={997--1006}
}

\bib{bartnik}{article}{
   author={Bartnik, R.},
   title={The mass of an asymptotically flat manifold},
   journal={Comm. Pure Appl. Math.},
   volume={39},
   date={1986},
   number={5},
   pages={661--693}
}

\bib{bray_npms}{article}{
	title={Negative point mass singularities in general relativity},
	author={Bray, H. L.},
	eprint={http://www.newton.ac.uk/webseminars/pg+ws/2005/gmr/0830/bray/},
	conference={
		title={Global problems in mathematical relativity},
		address={Isaac Newton Institute, University of Cambridge},
		date={2005-08-30}
	}
}

\bib{bray_RPI}{article}{
   author={Bray, H. L.},
   title={Proof of the Riemannian Penrose inequality using the positive mass theorem},
   journal={J. Differential Geom.},
   volume={59},
   date={2001},
   number={2},
   pages={177--267}
}

\bib{zas}{article}{
	title={A geometric theory of zero area singularities in general relativity},
	author={Bray, H. L.},
	author={Jauregui, J.},
	eprint={http://arxiv.org/abs/0909.0522},
	date={2009}
}

\bib{bray_lee}{article}{
   author={Bray, H. L.},
   author={Lee, D. A.},
   title={On the Riemannian Penrose inequality in dimensions less than eight},
   journal={Duke Math. J.},
   volume={148},
   date={2009},
   number={1},
   pages={81--106}
}

\bib{imcf}{article}{
   author={Huisken, G.},
   author={Ilmanen, T.},
   title={The inverse mean curvature flow and the Riemannian Penrose inequality},
   journal={J. Differential Geom.},
   volume={59},
   date={2001},
   number={3},
   pages={353--437}
}

\bib{thesis}{thesis}{
  author={Jauregui, J.},
  title={Mass estimates, conformal techniques, and singularities in general relativity},
  type={Ph.D. thesis},
  organization={Duke University},
  date={2010}
}

\bib{reed_simon}{book}{
	title={Functional Analysis},
	author={Reed, M.},
    author={Simon, B.},
	publisher={Elsevier},
	year={1980}
}

\bib{robbins}{article}{
   author={Robbins, N.},
   title={Zero area singularities in general relativity and inverse mean curvature flow},
   journal={Classical Quantum Gravity},
   volume={27},
   date={2010},
   number={2}
}

\bib{schoen_yau}{article}{
	author={Schoen, R.},
	author={Yau, S.-T.},
	title={On the proof of the positive mass conjecture in general relativity},
	journal={Commun. Math. Phys.},
	volume={65},
	year={1979},
	pages={45--76}
}

\bib{simon}{book}{
   author={Simon, L.},
   title={Lectures on geometric measure theory},
   series={Proceedings of the Centre for Mathematical Analysis, Australian
   National University},
   volume={3},
   publisher={Australian National University Centre for Mathematical
   Analysis},
   place={Canberra},
   date={1983}
}

\bib{witten}{article}{
	author={Witten, E.},
	title={A new proof of the Positive Energy Theorem},
	journal={Commun. Math. Phys.},
	volume={80},
	year={1981},
	pages={381-402}
}	

\end{biblist}
\end{bibdiv}

\end{document}